\newtheorem{theorem}{Theorem}
\newtheorem{lemma}{Lemma}
\newtheorem{remark}{Remark}
\newtheorem{condition}{Condition}
\begin{document}

\title{An SDE approximation for stochastic differential delay equations with state-dependent colored noise}
\date{}

\author[1]{Austin McDaniel}
\author[2]{\"Ozer Duman}
\author[2,3]{Giovanni Volpe}
\author[1]{Jan Wehr}
\affil[1]{Department of Mathematics, University of Arizona, Tucson, Arizona 85721 USA}
\affil[2]{Soft Matter Lab, Department of Physics, Bilkent University, Cankaya, Ankara 06800, Turkey}
\affil[3]{UNAM - National Nanotechnology Research Center, Bilkent University, Cankaya, Ankara 06800, Turkey}

\maketitle

\begin{abstract}

\textbf{
We consider a general multidimensional stochastic differential delay equation (SDDE) with state-dependent colored noises. We approximate it by a stochastic differential equation (SDE) system and calculate its limit as the time delays and the correlation times of the noises go to zero. The main result is proven using a theorem about convergence of stochastic integrals by Kurtz and Protter. It formalizes and extends a result that has been obtained in the analysis of a noisy electrical circuit with delayed state-dependent noise, and may be used as a working SDE approximation of an SDDE modeling a real system where noises are correlated in time and whose response to noise sources depends on the system's state at a previous time.
} \\

\noindent \emph{Keywords}: Stochastic differential equations, stochastic differential delay equations, colored noise, noise-induced drift \\

\noindent AMS Subject Classification: 60H10, 34F05

\end{abstract}

\section{Introduction}  

\indent 

Stochastic differential equations (SDEs) are widely employed to describe the time evolution of systems encountered in physics, biology, and economics, among others \cite{Oksendal, Arnold, Karatzas}. It is often natural to introduce a delay into the equations in order to account for the fact that the system's response to changes in its environment is not instantaneous. We are, therefore, led to consider stochastic differential delay equations (SDDEs).  A survey of the theory of SDDEs, including theorems on existence and uniqueness of solutions as well as stochastic stability, can be found in Ref.~\cite{Ivanov}.  In addition to numerous other results, a treatment of the (appropriately defined) Markov property and the concept of a generator are contained in  Ref.~\cite{Kushner1}.  Numerical aspects of SDDEs are treated in  Ref.~\cite{Kushner2}.  For other aspects of the theory see Ref.~\cite{Mao}.  

Since the theory of SDDEs is much less developed than the theory of SDEs \cite{Oksendal, Arnold, Karatzas}, it is useful to introduce working approximations of SDDEs by SDEs. For example, such an approximation was applied in Ref.~\cite{Pesce} to a physical system with one dynamical degree of freedom (the output voltage of a noisy electrical circuit).  It was used there to show that the experimental system shifts from obeying Stratonovich calculus to obeying It\^o calculus as the ratio between the driving noise correlation time and the feedback delay time changes (see \cite{Polettini} for related work). In this article we employ the  systematic and rigorous method developed in Ref.~\cite{Hottovy} to obtain much more general results which are applicable to systems with an arbitrary number of degrees of freedom, driven by several colored noises, and involving several time delays. More precisely, we derive an approximation of SDDEs driven by colored noise (or noises) in the limit in which the correlation times of the noises and the response delays go to zero at the same rate.   The approximating equation contains {\it noise-induced drift} terms which depend on the ratios of the delay times to the noise correlation times.

An equation related to, but simpler than, the one considered here was studied in a different context in  Ref.~\cite{Pavliotis}.  There, the limit that the authors derive is analogous to our Theorem 1.  Results on small delay approximations for SDDEs of a different type than the one considered here are contained in Ref.~\cite{Guillouzic}; see also Ref.~\cite{Longtin}.  We are not aware of any previous studies addressing the question of the effective equation in the limit as the time delays and correlation times of the noises go to zero, other than a less mathematical and less general treatment in our previous work~\cite{Pesce}.  In fact, the present paper was motivated by~\cite{Pesce} and can be seen as its mathematically formal extension.

\section{Mathematical Model}

We consider the multidimensional SDDE system
\begin{equation} \label{SDDE}
d \bm{x} _t = \bm{f}(\bm{x} _t) dt + \bm{g}(\bm{x}_{t - \delta}) \bm{\eta} _t dt
\end{equation}
where $\bm{x} _t = (x^1 _t, ..., x^i _t, ..., x^m _t)^{\rm T}$ is the state vector (the superscript ${\rm T}$ denotes transposition), $\bm{f}(\bm{x} _t) = (f^1 (\bm{x}_t), ..., f^i (\bm{x}_t), ..., f^m (\bm{x}_t))^{\rm T}$ where $\bm{f}$ is a vector-valued function describing the deterministic part of the dynamical system, 
$$\bm{g}(\bm{x}_{t - \delta}) =
\left[\begin{array}{ccccc}
g^{11}(\bm{x}_{t - \delta}) & \dots & g^{1j}(\bm{x}_{t - \delta}) & \dots & g^{1n}(\bm{x}_{t - \delta})  \\
\vdots & \ddots & \vdots & \ddots & \vdots \\
g^{i1}(\bm{x}_{t - \delta}) & \dots & g^{ij}(\bm{x}_{t - \delta}) & \dots & g^{in}(\bm{x}_{t - \delta}) \\
\vdots & \ddots & \vdots & \ddots & \vdots \\
g^{m1}(\bm{x}_{t - \delta}) & \dots &  g^{mj}(\bm{x}_{t - \delta}) & \dots & g^{mn}(\bm{x}_{t - \delta})
\end{array}\right]
$$
where $\bm{g}$ is a matrix-valued function, $\bm{x} _{t - \delta} = (x^1 _{t - \delta _1}, ..., x^i _{t - \delta _i}, ..., x^m _{t - \delta _m})^{\rm T}$ is the delayed state vector (note that each component is delayed by a possibly different amount $\delta _i > 0$), and $\bm{\eta} _t = (\eta ^1 _t, ..., \eta ^j _t, ..., \eta ^n _t)^{\rm T}$ is a vector of independent noises $\eta^j$, where the $\eta^j$ are colored (harmonic) noises with characteristic correlation times $\tau_j$.  These stochastic processes (defined precisely in equation~\eqref{eq:harmonicnoise}) have continuously differentiable realizations which makes the realizations of the solution process $\bm{x}$ twice continuously differentiable under the natural assumptions on ${\bm f}$ and ${\bm g}$ that are made in the statement of Theorem 1.

Equation~\eqref{SDDE} is written componentwise as
\begin{equation} \label{SDDE component}
\frac{dx^i(t)}{ dt} = f^i(x^1(t), \; \dots \; , x^m(t)) + \sum_{j=1}^n g^{ij}(x^1(t-\delta_1), \; \dots \;, x^m(t - \delta_m)) \eta^j(t) \; .
\end{equation}
For each $i$, we define the process $y^i(t) = x^i(t - \delta_i)$.  In terms of the $y$ variables, equation~\eqref{SDDE component}  becomes
\begin{equation} \label{SDDE component shifted}
\frac{dy^i(t + \delta _i)}{dt} = f^i(y^1(t + \delta _1), \; \dots \; , y^m(t + \delta _m)) + \sum_{j=1}^n g^{ij}(y^1(t), \; \dots \; , y^m(t)) \eta^j(t) \; .
\end{equation}
Expanding to first order in $\delta_i$, we have $\dot{y}^i (t+\delta _i) \cong \dot{y}^i (t) + \delta _i \ddot{y}^i (t)$ and 
\begin{align*}
f^i(y^1(t + \delta _1), \; \dots \; , y^m(t + \delta _m)) & \cong f^i(y^1(t), \; \dots \; , y^m(t)) \\
& + \sum _{k = 1}^m \delta _k \frac{\partial f^i (y^1(t), \; \dots \; , y^m(t))}{\partial y_k} \frac{d y^k (t)}{dt} \; .
\end{align*}
Substituting these approximations into equation~\eqref{SDDE component shifted}, we obtain a new (approximate) system
$$
\frac{dy^i(t)}{dt} + \delta _i \frac{d^2 y^i(t)}{dt ^2}= f^i(\textbf{y}(t)) + \sum _{k = 1}^m \delta _k \frac{\partial f^i (\textbf{y}(t))}{\partial y_k} \frac{d y^k (t)}{dt} + \sum_{j=1}^n g^{ij}(\textbf{y}(t)) \eta^j(t)
$$
where $\textbf{y}(t) = (y^1(t), \; \dots \; , y^m(t))^{\rm T}$.
We write these equations as the first order system
\begin{equation}\label{MainSDE}
\left\{
\begin{array}{ccl}
\displaystyle dy^i _t & = &  \displaystyle v ^i _t dt \\[12pt]
\displaystyle dv^i _t & = & \displaystyle \left[- \frac{1}{\delta _i} v^i _t + \frac{1}{\delta _i} f^i (\textbf{y}_t)  +  \frac{1}{\delta _i} \sum_{k =1}^m  \delta _k \frac{\partial f ^i (\textbf{y}_t)}{\partial y _k} v ^k _t +  \frac{1}{\delta _i} \sum_{j=1}^n  g^{ij} (\textbf{y} _t) \eta^j _t \right] dt \; . \end{array}
\right.
\end{equation}
Supplemented by the equations defining the noise processes $\eta^j$ (see equation~\eqref{eq:harmonicnoise}), these equations become the SDE system we study in this article.

\section{Derivation of Limiting Equation}

We study the limit of the system~\eqref{MainSDE} as the time delays $\delta_i$ and the correlation times of the colored noises go to zero.  We take each $\eta^j$ to be a stationary harmonic noise process \cite{Schimansky-Geier} defined as the stationary solution of the SDE
\begin{equation}\label{eq:harmonicnoise}
\left\{
\begin{array}{ccl}
\displaystyle d \eta_t^j & = & \displaystyle \frac{1}{\tau _j} \frac{\Gamma}{\Omega ^2} z ^j _t dt \\[12pt]
\displaystyle dz ^j _t & = & \displaystyle - \frac{1}{\tau _j} \frac{\Gamma ^2}{\Omega ^2} z ^j _t dt - \frac{1}{\tau _j} \Gamma \eta ^j _t dt + \frac{1}{\tau _j}\Gamma dW ^j _t
\end{array}
\right.
\end{equation}
where $\Gamma>0$ and $\Omega$ are constants, $\bm{W}_t = (W^1 _t, ..., W^j _t, ..., W^n _t)^{\rm T}$ is an $n$-dimensional Wiener process, and $\tau _j$ is the correlation time of the Ornstein-Uhlenbeck process obtained by taking the limit $\Gamma,\, \Omega ^2 \rightarrow \infty$ while keeping $\frac{\Gamma}{\Omega ^2}$ constant.  The system~\eqref{eq:harmonicnoise} has a unique stationary measure.  The distribution of the system's solution with an arbitrary (nonrandom) initial condition  converges to this stationary measure as $t \to \infty$.  The solution with the initial condition distributed according to the stationary measure defines a stationary process, whose realizations will play the role of colored noise in the SDE system~\eqref{MainSDE}.   We note that as $\tau _j \rightarrow 0$, the component  $\eta^j$  of the solution of equation~\eqref{eq:harmonicnoise} converges to a white noise (see the Appendix for details).

In taking the limit as the delay times $\delta_i$ and the noise correlation times $\tau_j$ go to zero, we assume that all the $\delta_i$ and $\tau_j$ stay proportional to a single characteristic time $\epsilon > 0$.  That is, we let $\delta _i = c_i \epsilon$ and $\tau _j = k _j \epsilon$ where $c_i, k_j > 0$ remain constant in the limit $\delta _i, \tau _j , \epsilon \rightarrow 0$.

We consider the solution to equations~\eqref{MainSDE} and \eqref{eq:harmonicnoise} on a bounded time interval $0 \leq t \leq T$.  We let $(\Omega, \mathcal{F}, P)$ denote the underlying probability space.  We will use the filtration $\{\mathcal{F}_t : t \geq 0 \}$ on $(\Omega,\mathcal{F},P)$ where $\mathcal{F}_t$ is (the usual augmentation of) $\sigma(\{\bm{W}_s:s\leq t\})$, i.e. the $\sigma$-algebra generated by the Wiener process $\bm{W}$ up to time $t$.    

Throughout this article, for an arbitrary vector $\bm{a} \in \mathbb{R}^d$, $\| \bm{a} \|$ will denote its Euclidean norm, and for a matrix $\bm{A} \in \mathbb{R}^{d \times d}$, $\| \bm{A} \|$ will denote the matrix norm induced by the Euclidean norm on $\mathbb{R}^d$.

\begin{theorem}  Suppose that the $f^i$ are bounded functions with bounded, continuous first derivatives and bounded second derivatives and that the $g^{ij}$ are bounded functions with bounded, continuous first derivatives.  Let 
$(\bm{y}^{\epsilon}, \bm{v}^{\epsilon}, \bm{\eta}^{\epsilon}, \bm{z}^{\epsilon})$ solve equations~\eqref{MainSDE} and \eqref{eq:harmonicnoise} (which depend on $\epsilon$ through $\delta_i, \tau_j$) on $0 \leq t \leq T$ with initial conditions $(\bm{y}_0, \bm{v}_0, \bm{\eta} ^{\epsilon} _0, \bm{z} ^{\epsilon} _0)$, where $(\bm{y}_0, \bm{v}_0)$ is the same for every $\epsilon$ and $(\bm{\eta} ^{\epsilon} _0, \bm{z} ^{\epsilon} _0)$ is distributed according to the stationary distribution corresponding to equation~\eqref{eq:harmonicnoise}.  Let $\bm{y}$ solve     
\begin{align} \label{thm limiting equation}
d y^i _t = f^i (\bm{y} _t) dt + \sum _{p,j} g^{pj} (\bm{y} _t) \frac{\partial g^{ij} (\bm{y} _t)}{\partial y_p} & \left[ \frac{\frac{\Gamma}{\Omega ^2} \frac{\delta _p}{\tau _j} + \frac{1}{\Gamma} \left(1 - \frac{\delta _p}{\tau_j} \right)}{2 \left( \frac{\Gamma}{\Omega ^2} \frac{\delta _p}{\tau _j} \left(1 + \frac{\delta _p}{\tau _j} \right) + \frac{1}{\Gamma} \right) } \right] dt \\[1em]
&+ \sum_j g^{ij} (\bm{y}_t) dW^j _t \notag
\end{align}  
on $0 \leq t \leq T$ with the same initial condition~$\bm{y}_0$, and suppose strong uniqueness holds on $0 \leq t \leq T$ for (\ref{thm limiting equation}) with the initial condition $\bm{y}_0$ (strong uniqueness is implied, for example, by the additional assumption that the $g^{ij}$ have bounded second derivatives).  Then
\begin{equation}
\lim _{\epsilon \rightarrow 0} P \left[ \sup_{0 \leq t \leq T} \| \bm{y}^{\epsilon}_t - \bm{y} _t \| > a \right] = 0
\end{equation}
for every $a>0$.

\end{theorem}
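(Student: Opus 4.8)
The strategy is to derive from \eqref{MainSDE}--\eqref{eq:harmonicnoise} a closed integral identity for the slow variable $\bm{y}^{\epsilon}_t$ alone, in which every term either converges to the corresponding term of \eqref{thm limiting equation} or is negligible as $\epsilon\to0$, and then to pass to the limit using the Kurtz--Protter convergence theorem for stochastic integrals together with the assumed uniqueness for \eqref{thm limiting equation}. I would first record the a priori estimates. Since the $f^i$ and $g^{ij}$ are bounded, $(\bm{\eta}^{\epsilon},\bm{z}^{\epsilon})$ is, for each $\epsilon$, a stationary Gaussian (Ornstein--Uhlenbeck) process; its stationary covariance is read off explicitly from \eqref{eq:harmonicnoise}, giving $\mathbb{E}\,|\eta^{j,\epsilon}_t|^2,\ \mathbb{E}\,|z^{j,\epsilon}_t|^2=O(\tau_j^{-1})=O(\epsilon^{-1})$. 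Substituting this into the $\bm{v}$--equation of \eqref{MainSDE} and applying Gronwall gives $\mathbb{E}\sup_{0\le t\le T}\|\bm{v}^{\epsilon}_t\|^2=O(\epsilon^{-1})$, $\mathbb{E}\int_0^T\|\bm{v}^{\epsilon}_s\|^2\,ds=O(1)$, and $\sup_{0\le t\le T}\mathbb{E}\,\|\bm{y}^{\epsilon}_t\|^2=O(1)$. In particular $\delta_i\bm{v}^{\epsilon}_t$, $\tau_j\eta^{j,\epsilon}_t$ and $\tfrac{\tau_j}{\Gamma}z^{j,\epsilon}_t$ all tend to $0$ in probability, uniformly on $[0,T]$.

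Next I would integrate the $\bm{v}$--equation: from \eqref{MainSDE}, $v^i_s\,ds=-\delta_i\,dv^i_s+\bigl[f^i(\bm{y}_s)+\sum_k\delta_k\,\partial_k f^i(\bm{y}_s)\,v^k_s+\sum_j g^{ij}(\bm{y}_s)\eta^j_s\bigr]ds$, and from \eqref{eq:harmonicnoise} the harmonic--noise identity $\eta^j_s\,ds=dW^j_s-\tau_j\,d\eta^j_s-\tfrac{\tau_j}{\Gamma}\,dz^j_s$. Using $dy^i_s=v^i_s\,ds$ and integrating yields
\begin{align*}
y^{i,\epsilon}_t &= y^i_0+\int_0^t f^i(\bm{y}^{\epsilon}_s)\,ds+\sum_j\int_0^t g^{ij}(\bm{y}^{\epsilon}_s)\,dW^j_s \\
&\quad-\sum_j\tau_j\int_0^t g^{ij}(\bm{y}^{\epsilon}_s)\,d\eta^{j,\epsilon}_s-\sum_j\frac{\tau_j}{\Gamma}\int_0^t g^{ij}(\bm{y}^{\epsilon}_s)\,dz^{j,\epsilon}_s+\mathcal{E}^{i,\epsilon}_t,
\end{align*}
where $\mathcal{E}^{i,\epsilon}_t$ gathers $-\delta_i(v^{i,\epsilon}_t-v^{i,\epsilon}_0)$ and $\sum_k\delta_k\int_0^t\partial_k f^i(\bm{y}^{\epsilon}_s)v^{k,\epsilon}_s\,ds$; the estimates of the first paragraph give $\sup_{t\le T}|\mathcal{E}^{i,\epsilon}_t|\to0$ in probability. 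Because $\bm{y}^{\epsilon}$ has $C^1$ realizations, the ordinary chain rule gives $dg^{ij}(\bm{y}^{\epsilon}_s)=\sum_p\partial_p g^{ij}(\bm{y}^{\epsilon}_s)v^{p,\epsilon}_s\,ds$ with no It\^o correction, so integrating by parts turns the two $\tau_j$--integrals into boundary terms that vanish in probability, plus
\begin{equation*}
\sum_{j,p}\tau_j\int_0^t\partial_p g^{ij}(\bm{y}^{\epsilon}_s)\,v^{p,\epsilon}_s\,\eta^{j,\epsilon}_s\,ds+\sum_{j,p}\frac{\tau_j}{\Gamma}\int_0^t\partial_p g^{ij}(\bm{y}^{\epsilon}_s)\,v^{p,\epsilon}_s\,z^{j,\epsilon}_s\,ds .
\end{equation*}

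The core of the argument is to show that these two families of integrals converge, in probability and uniformly on $[0,T]$, to $\sum_{j,p}\bigl[\,\cdots\,\bigr]\int_0^t g^{pj}(\bm{y}_s)\,\partial_p g^{ij}(\bm{y}_s)\,ds$ with the bracket equal to the coefficient displayed in \eqref{thm limiting equation}. For this I would (i) solve the $\bm{v}$--equation by variation of constants, $v^{p,\epsilon}_s=e^{-s/\delta_p}v^{p}_0+\tfrac1{\delta_p}\int_0^s e^{-(s-r)/\delta_p}\bigl[f^p(\bm{y}^{\epsilon}_r)+\sum_k\delta_k\partial_k f^p\,v^{k,\epsilon}_r+\sum_l g^{pl}(\bm{y}^{\epsilon}_r)\eta^{l,\epsilon}_r\bigr]dr$, and check that only the $\sum_l g^{pl}\eta^{l,\epsilon}$ input contributes in the limit (the $e^{-s/\delta_p}v^p_0$, $f^p$ and $O(\delta_k)$ inputs give terms of order a positive power of $\epsilon$ after integration, since $\int_0^{\cdot}\eta^{l,\epsilon}_s\,ds$ is bounded in probability); (ii) use that the exponential kernels of widths $\delta_p,\tau_j$ localise $r$ near $s$, so the slowly varying factors $\partial_p g^{ij}(\bm{y}^{\epsilon}_s)$, $g^{pl}(\bm{y}^{\epsilon}_r)$ may be frozen at time $s$, reducing the problem to evaluating $\tau_j\,\mathbb{E}[v^{p,\epsilon}_s\eta^{j,\epsilon}_s]$ and $\tfrac{\tau_j}{\Gamma}\mathbb{E}[v^{p,\epsilon}_s z^{j,\epsilon}_s]$ in quasi-stationarity; (iii) compute these from the explicit stationary autocovariance of the harmonic noise — after rescaling time by $\tau_j$ they depend only on the ratio $\delta_p/\tau_j=c_p/k_j$ — and verify that the two contributions add up to $g^{pj}(\bm{y}_s)\,\partial_p g^{ij}(\bm{y}_s)$ times exactly the bracketed factor in \eqref{thm limiting equation} (the off-diagonal terms $l\ne j$ drop out because the $\eta^l$ are independent). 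Upgrading ``expectation'' to ``in probability, uniformly in $t$'' is done by a block/Riemann-sum argument that exploits the fast mixing (exponential decay of correlations on the scale $\epsilon$) of the Gaussian fast subsystem.

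Combining all of this, $\bm{y}^{\epsilon}_t=\bm{y}_0+\int_0^t\bm{b}(\bm{y}^{\epsilon}_s)\,ds+\sum_j\int_0^t \bm{g}^{\cdot j}(\bm{y}^{\epsilon}_s)\,dW^j_s+\bm{R}^{\epsilon}_t$, where $\bm{b}$ is the drift of \eqref{thm limiting equation} and $\sup_{t\le T}\|\bm{R}^{\epsilon}_t\|\to0$ in probability. Under the hypotheses $\bm{b}$ and $\bm{g}$ are bounded and continuous (Lipschitz under the extra second-derivative assumption), so $\{\bm{y}^{\epsilon}\}$ is tight in $C([0,T];\mathbb{R}^m)$; along any subsequence it converges in distribution, jointly with $\bm{W}$, to a limit which, by the Kurtz--Protter theorem applied with integrands $\bm{g}^{\cdot j}(\bm{y}^{\epsilon}_{\cdot})$ and integrators $\bm{W},\,t$ (which trivially satisfy the required uniform-tightness condition), solves \eqref{thm limiting equation} with initial datum $\bm{y}_0$; by the assumed strong uniqueness this limit is $\bm{y}$, so the whole family converges, and since all processes are adapted to the fixed filtration the convergence is in probability, uniformly on $[0,T]$, which is the assertion. (Alternatively, once $\bm{R}^{\epsilon}\to0$ in probability one may localise $\bm{y}^{\epsilon},\bm{y}$ to a large ball and close a Gronwall estimate for $\mathbb{E}\sup_{s\le t}\|\bm{y}^{\epsilon}_s-\bm{y}_s\|^2$ directly.) \emph{The main obstacle is the identification of the noise-induced drift in the third paragraph}: one must rigorously replace the rapidly oscillating products $\tau_j v^{p,\epsilon}_s\eta^{j,\epsilon}_s$ and $\tfrac{\tau_j}{\Gamma}v^{p,\epsilon}_s z^{j,\epsilon}_s$ by their quasi-stationary means while the slow variable $\bm{y}^{\epsilon}$ itself sits inside the coefficients and inside $\bm{v}^{\epsilon}$, forcing a quantitative freezing/mixing argument rather than a bare appeal to stationarity, and then carry through the Gaussian covariance computation for the coupled $(v^p,\eta^j,z^j)$ subsystem and check that it collapses to the precise closed-form coefficient of \eqref{thm limiting equation}, which is where the harmonic-noise structure and the dependence on $\delta_p/\tau_j$ enter.
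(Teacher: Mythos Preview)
Your outline is a genuinely different route from the paper's, and it is plausible, but the step you flag as the main obstacle is exactly the one the paper sidesteps by a structural trick rather than confronting head-on.

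The paper does \emph{not} work component-wise. It enlarges to $\bm{X}^{\epsilon}=(\bm{y}^{\epsilon},\bm{\xi}^{\epsilon},\bm{\zeta}^{\epsilon})$ with $\bm{V}^{\epsilon}=\dot{\bm{X}}^{\epsilon}=(\bm{v}^{\epsilon},\bm{\eta}^{\epsilon},\bm{z}^{\epsilon})$ and writes a single equation $d\bm{V}^{\epsilon}=[\bm{F}/\epsilon-\bm{\gamma}\bm{V}^{\epsilon}/\epsilon+\bm{\kappa}\bm{V}^{\epsilon}]\,dt+(\bm{\sigma}/\epsilon)\,d\bm{W}$. After one integration by parts the problematic object is the outer product $\epsilon\bm{V}^{\epsilon}(\bm{V}^{\epsilon})^{\rm T}\,dt$. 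Applying the It\^o product rule to $\epsilon\bm{V}^{\epsilon}(\epsilon\bm{V}^{\epsilon})^{\rm T}$ shows that $\bm{B}=\epsilon\bm{V}^{\epsilon}(\bm{V}^{\epsilon})^{\rm T}\,\Delta t$ solves a Lyapunov equation $\bm{\gamma}\bm{B}+\bm{B}\bm{\gamma}^{\rm T}=-\bm{C}$, whose unique solution is $\bm{J}\,dt$ (with $\bm{\gamma}\bm{J}+\bm{J}\bm{\gamma}^{\rm T}=\bm{\sigma}\bm{\sigma}^{\rm T}$) plus remainder differentials. The noise-induced drift is then read off from the explicit $\bm{J}$ and $\bm{\gamma}^{-1}$; no freezing, no quasi-stationary covariance computation, and no mixing argument are needed. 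Kurtz--Protter is then applied to the \emph{enlarged} system with semimartingale $\bm{H}^{\epsilon}$ whose last block is $\epsilon\bm{V}^{\epsilon}(\epsilon\bm{V}^{\epsilon})^{\rm T}$; the work goes into checking stochastic boundedness of its finite-variation part (their Condition~1), which replaces your block/Riemann-sum mixing estimate.

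What each approach buys: your harmonic-noise identity $\eta^{j}\,ds=dW^{j}-\tau_{j}\,d\eta^{j}-\tfrac{\tau_{j}}{\Gamma}\,dz^{j}$ plus variation of constants is more transparent about \emph{where} the ratio $\delta_{p}/\tau_{j}$ enters, and keeps Kurtz--Protter trivial (integrators $t,\bm{W}$ only). But the price is the freezing step, where $v^{p,\epsilon}$ carries the history of $\bm{y}^{\epsilon}$ through the coefficients $g^{pl}(\bm{y}^{\epsilon}_{r})$, so ``replace $\tau_{j}v^{p,\epsilon}_{s}\eta^{j,\epsilon}_{s}$ by its quasi-stationary mean'' needs a genuine quantitative argument (corrector/Poisson equation or a careful block decomposition with error control), not just fast mixing of the Gaussian subsystem. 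The paper's Lyapunov-equation device turns this analytic step into linear algebra. A minor point: your a~priori bound $\mathbb{E}\sup_{t\le T}\|\bm{v}^{\epsilon}_{t}\|^{2}=O(\epsilon^{-1})$ is optimistic; the paper obtains only $\mathbb{E}\sup_{t}\|\epsilon\bm{V}^{\epsilon}_{t}\|^{2}\le C\epsilon^{1/2}$ via the Da~Prato--Kwapie\'n--Zabczyk factorization (their Lemma~5), with the $O(\epsilon^{-1})$ bound holding only pointwise in~$t$ (their Lemma~8). This does not break your argument, but you should not lean on the stronger sup-bound.
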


\vspace{5pt}

\begin{remark}
Taking the limit $\Gamma, \Omega ^2 \rightarrow \infty$ in equation~\eqref{thm limiting equation} while keeping $\frac{\Gamma}{\Omega ^2}$ constant, we get the simpler limiting equation
\begin{equation} \label{thm limiting equation 2}
d y^i _t = f^i (\bm{y} _t) dt + \sum _{p,j} g^{pj} (\bm{y} _t) \frac{\partial g^{ij} (\bm{y} _t)}{\partial y_p} {1\over2} \left( 1 + \frac{\delta _p}{\tau _j} \right)^{-1} dt + \sum_j g^{ij} (\bm{y}_t) dW^j_t \; .
\end{equation}
\end{remark}

\vspace{5pt}

\begin{remark} Our choice of the distribution of the initial condition $(\bm{\eta} ^{\epsilon} _0, \bm{z} ^{\epsilon} _0)$ is the only one that makes the noise process $(\bm{\eta} ^{\epsilon} , \bm{z} ^{\epsilon} )$ stationary---physically a very natural assumption.  However, the proof of Theorem 1 applies to any choice of $(\bm{\eta} ^{\epsilon} _0, \bm{z} ^{\epsilon} _0)$ such that $E[ \| \bm{\eta} ^{\epsilon} _0 \| ^2]$ and $E[ \| \bm{z}^{\epsilon} _0 \| ^2]$ do not grow faster than $1 / \epsilon$ as $\epsilon \rightarrow 0$.
\end{remark}

\bigskip

\noindent\emph{Outline of the proof of Theorem 1.}  The proof uses the method of Hottovy {\emph et al.} \cite{Hottovy}.  The main tool that we use is a theorem by Kurtz and Protter about convergence of stochastic integrals.  In Section 3.1 we write equations~\eqref{MainSDE} and \eqref{eq:harmonicnoise} together in the matrix form that is used in the Kurtz-Protter theorem.  The theorem itself is stated in Section 3.2.  In Section 3.3 we use it to derive the limiting equations (\ref{thm limiting equation}) and (\ref{thm limiting equation 2}).  The key steps are integrating by parts and then rewriting a certain differential by solving a Lyapunov matrix equation.  In Section 4 we verify that the assumptions of the Kurtz-Protter theorem are satisfied, thus completing the proof of Theorem 1.

\subsection{Matrix form} We introduce the vector process
$$
\bm{X}^{\epsilon} = (\bm{y}^{\epsilon}, \bm{\xi}^{\epsilon}, \bm{\zeta}^{\epsilon}) \; ,
$$
where, as in the statement of the theorem,  $(\bm{y}^{\epsilon}, \bm{v}^{\epsilon}, \bm{\eta}^{\epsilon}, \bm{z}^{\epsilon})$ solves equations~\eqref{MainSDE} and \eqref{eq:harmonicnoise}, $\bm{\xi}^{\epsilon}_t = ((\xi^{\epsilon}_t)_1, \; \dots \; , (\xi^{\epsilon}_t)_n)$ where $(\xi^{\epsilon}_t)_j = \int_0^t (\eta^{\epsilon}_s)_j \,ds$, and
$\bm{\zeta}^{\epsilon}_t = ((\zeta^{\epsilon}_t)_1, \; \dots \; , (\zeta^{\epsilon}_t)_n)$ where $(\zeta^{\epsilon}_t)_j = \int_0^t (z^{\epsilon}_s)_j \, ds = \tau _j \frac{\Omega ^2}{\Gamma} \left[ (\eta^{\epsilon}_t)_j - (\eta^{\epsilon}_0)_j \right]$.  We let $\bm{V}^{\epsilon}_t = \dot{\bm{X}}^{\epsilon}_t$, so that $\bm{V}^{\epsilon}_t = (\bm{v}^{\epsilon}_t, \bm{\eta}^{\epsilon}_t, \bm{z}^{\epsilon}_t)$.  
Equations \eqref{MainSDE} and \eqref{eq:harmonicnoise} can be written in terms of the processes $\bm{X}^{\epsilon}$ and $\bm{V}^{\epsilon}$ as
\begin{equation}\label{vector form}
\left\{
\begin{array}{ccl}
\displaystyle d \bm{X}^{\epsilon} _t &=&  \displaystyle \bm{V}^{\epsilon} _t dt \\[12pt]
\displaystyle d \bm{V}^{\epsilon}  _t & = & \displaystyle \left[ \frac{\bm{F}(\bm{X}^{\epsilon} _t)}{\epsilon} - \frac{\bm{\gamma} (\bm{X}^{\epsilon} _t)}{\epsilon} \bm{V}^{\epsilon} _t + \bm{\kappa} (\bm{X}^{\epsilon} _t) \bm{V} ^{\epsilon} _t \right] dt + \frac{\bm{\sigma}}{\epsilon} d \bm{W} _t
\end{array}
\right.
\end{equation}
where $\bm{F}(\bm{X}^{\epsilon} _t)$ is the vector of length $m + 2n$ that is given, in block form, by
$$ \bm{F}(\bm{X}^{\epsilon} _t) =
\left[\begin{array}{c}
\hat{\bm{f}}( \bm{y^{\epsilon} _t}) \\
\bm{0} \\
\bm{0}
\end{array}\right]
$$
where $\hat{\bm{f}}( \bm{y}^{\epsilon} _t) = \left(\frac{f^1 ( \bm{y}^{\epsilon} _t)}{c_1}, \dots, \frac{f ^m ( \bm{y}^{\epsilon} _t)}{c_m} \right)^T$; $\bm{\gamma}(\bm{X}^{\epsilon} _t)$ is the $(m + 2n) \; \times \; (m + 2n)$ matrix that is given, in block form, by
\begin{equation}\label{eq:10}
 \bm{\gamma}(\bm{X}^{\epsilon} _t) =
\left[
\setlength{\extrarowheight}{5pt} 
\begin{array}{ccc}
\bm{D}^1 & - \hat{ \bm{g}}(\bm{y}^{\epsilon} _t) & \bm{0}  \\
\bm{0} & \bm{0} & - \frac{\Gamma}{\Omega ^2} \bm{D} ^2 \\
\bm{0} &  \Gamma \bm{D} ^2 &  \frac{\Gamma ^2}{\Omega ^2} \bm{D}^2
\end{array} 
\setlength{\extrarowheight}{5pt} 
\right]
\end{equation}
where 
\begin{equation*}
(\hat{ \bm{g}}(\bm{y}^{\epsilon} _t))_{ij} = \frac{g^{ij} (\bm{y}^{\epsilon} _t)}{c_i} \; ,
\end{equation*}
\begin{equation*} \bm{D}^1 =
\left[\begin{array}{cccc}
\frac{1}{c_1} & 0 & ... & 0  \\
0 & \frac{1}{c_2} & ... & 0\\
\vdots & \vdots & \ddots & \vdots \\
0 & 0 & ... & \frac{1}{c_m}
\end{array}\right],
\end{equation*}
and
\begin{equation*} \bm{D}^2 =
\left[\begin{array}{cccc}
\frac{1}{k_1} & 0 & ... & 0  \\
0 & \frac{1}{k_2} & ... & 0\\
\vdots & \vdots & \ddots & \vdots \\
0 & 0 & ... & \frac{1}{k_n}
\end{array}\right];
\end{equation*}
$\bm{\kappa}(\bm{X}^{\epsilon} _t)$ is the $(m + 2n) \; \times \; (m + 2n)$ matrix that is given, in block form, by
\begin{equation*} \bm{\kappa}(\bm{X}^{\epsilon} _t) =
\left[\begin{array}{ccc}
\hat{\bm{J}}_f (\bm{y}^{\epsilon} _t) & \bm{0} & \bm{0}  \\
\bm{0} & \bm{0} & \bm{0}\\
\bm{0} & \bm{0} & \bm{0}
\end{array}\right]
\end{equation*}
where 
\begin{equation*} \hat{\bm{J}}_f (\bm{y}^{\epsilon} _t) =
\left[
\setlength{\extrarowheight}{7pt} 
\begin{array}{cccc}
\frac{c_1}{c_1} \frac{\partial f^1 (\bm{y}^{\epsilon} _t)}{\partial y _1} & \frac{c_2}{c_1} \frac{\partial f^1 (\bm{y}^{\epsilon} _t)}{\partial y _2} & ... & \frac{c_m}{c_1} \frac{\partial f^1 (\bm{y}^{\epsilon} _t)}{\partial y _m}  \\
\frac{c_1}{c_2} \frac{\partial f^2 (\bm{y}^{\epsilon} _t)}{\partial y _1} & \frac{c_2}{c_2} \frac{\partial f^2 (\bm{y}^{\epsilon} _t)}{\partial y _2} & ... & \frac{c_m}{c_2} \frac{\partial f^2 (\bm{y}^{\epsilon} _t)}{\partial y _m}\\
\vdots & \vdots & \ddots & \vdots \\
\frac{c_1}{c_m} \frac{\partial f^m (\bm{y}^{\epsilon} _t)}{\partial y _1} & \frac{c_2}{c_m} \frac{\partial f^m (\bm{y}^{\epsilon} _t)}{\partial y _2} & ... & \frac{c_m}{c_m} \frac{\partial f^m (\bm{y}^{\epsilon} _t)}{\partial y _m}
\end{array}
\setlength{\extrarowheight}{7pt} 
\right];
\end{equation*}
$\bm{\sigma}$ is the $(m + 2n) \; \times \; n$ matrix that is given, in block form, by
$$ \bm{\sigma} =
\left[\begin{array}{c}
\bm{0}  \\
\bm{0} \\
\Gamma \bm{D}^2
\end{array}\right]
$$
and $\bm{W}$ is the $n$-dimensional Wiener process in equation~\eqref{eq:harmonicnoise}. Using the introduced notation, we obtain the desired matrix form of equations~\eqref{MainSDE} and \eqref{eq:harmonicnoise}. The equation for $\bm{V}^{\epsilon}_t$ becomes
\begin{equation*}
\left[\bm{\gamma}(\bm{X}^{\epsilon} _t) - \epsilon \bm{\kappa} (\bm{X}^{\epsilon} _t) \right] \bm{V} ^{\epsilon} _t dt =  \bm{F}(\bm{X}^{\epsilon} _t) dt + \bm{\sigma} d \bm{W} _t - \epsilon d \bm{V}^{\epsilon} _t .
\end{equation*}
By Lemma 2 in Section 4, for $\epsilon$ sufficiently small, $\bm{\gamma}(\bm{X}^{\epsilon} _t) - \epsilon \bm{\kappa} (\bm{X}^{\epsilon} _t)$ is invertible.  Thus, for $\epsilon$ sufficiently small, we can solve for $\bm{V}^{\epsilon} _t dt$, rewriting the equation for $\bm{X}^{\epsilon} _t$ as
\begin{equation*}
d \bm{X}^{\epsilon} _t =  \bm{V}^{\epsilon} _t dt = \left( \bm{\gamma}(\bm{X}^{\epsilon} _t) - \epsilon \bm{\kappa} (\bm{X}^{\epsilon} _t) \right) ^{-1} \left[ \bm{F}(\bm{X}^{\epsilon} _t ) dt + \bm{\sigma} d \bm{W} _t - \epsilon d \bm{V}^{\epsilon} _t \right] .
\end{equation*}
In integral form, this equation is
\begin{align} \label{integral form}
 \bm{X}^{\epsilon} _t = \bm{X}_0 &+ \int_0^t (\bm{\gamma}(\bm{X}^{\epsilon} _s) - \epsilon \bm{\kappa} (\bm{X}^{\epsilon} _s))^{-1} \bm{F}(\bm{X}^{\epsilon} _s) ds \notag \\
 &+ \int_0^t (\bm{\gamma}(\bm{X}^{\epsilon} _s) - \epsilon \bm{\kappa} (\bm{X}^{\epsilon} _s))^{-1} \bm{\sigma} d \bm{W} _s \\
 &- \int_0^t  (\bm{\gamma}(\bm{X}^{\epsilon} _s) - \epsilon \bm{\kappa} (\bm{X}^{\epsilon} _s))^{-1} \epsilon d \bm{V}^{\epsilon} _s \notag 
\end{align}
where $\bm{X}_0 = (\bm{y}_0, \bm{0}, \bm{0})$ is independent of $\epsilon$ due to our assumption that $\bm{y}_0$ is the same for all $\epsilon$.

\begin{remark}
The equations in~\eqref{vector form} have a structure similar to equations studied in Ref.~\cite{Hottovy}, except for the additional term $\bm{\kappa} (\bm{X}^{\epsilon}_t) \bm{V} ^{\epsilon} _t\,dt$.  The method of Ref.~\cite{Hottovy} will be suitably adapted to treat this term and to account for the structure of the other terms in the second equation in~\eqref{vector form}.
\end{remark}

\subsection{Convergence of stochastic integrals}
We use a theorem of Kurtz and Protter \cite{Kurtz} which, for greater clarity, we state here in a less general but sufficient form. 
Let $\{\mathcal{F}_t : t \geq 0 \}$ be a filtration on a probability space $(\Omega, \mathcal{F}, P)$.  In our case  $\mathcal{F}_t$ will be the usual augmentation of  $\sigma(\{\bm{W}_s:s\leq t\})$ (the $\sigma$-algebra generated by the Wiener process $\bm{W}$ up to time $t$) introduced earlier.  The processes we consider below are assumed to be adapted to this filtration.
We consider a family of pairs of processes $(\bm{U}^{\epsilon}, \bm{H}^{\epsilon})$ where $\bm{U}^{\epsilon}$ has paths in $C([0,T], \mathbb{R}^{m + 2n})$ (i.e. the space of continuous functions from $[0,T]$ to $\mathbb{R}^{m + 2n}$) and where $\bm{H}^{\epsilon} $ is a semimartingale with paths in $C([0,T], \mathbb{R}^d)$.  Let $\bm{H}^{\epsilon}  = \bm{M}^{\epsilon}  + \bm{A}^{\epsilon} $ be the Doob-Meyer decomposition of $\bm{H}^{\epsilon} $ so that $\bm{M}^{\epsilon} $ is a local martingale and $\bm{A}^{\epsilon} $ is a process of locally bounded variation \cite{Revuz}.  We denote the total variation of $\bm{A}^{\epsilon}$ by $V(\bm{A}^{\epsilon})$.  Let  $\bm{h}$ and $\bm{h}^{\epsilon}:\mathbb{R}^{m + 2n} \rightarrow \mathbb{R}^{(m + 2n) \times d}$, $\epsilon > 0$, be a family of matrix-valued functions.  Suppose that the process $\bm{Y}^{\epsilon}$, with paths in $C([0,T], \mathbb{R}^{m + 2n})$, satisfies the stochastic integral equation
\begin{equation}
\bm{Y}^{\epsilon}_t = \bm{Y}_0 + \bm{U}^{\epsilon} _t + \int _0 ^t \bm{h}^{\epsilon}(\bm{Y}^{\epsilon} _s) d \bm{H}^{\epsilon} _s
\end{equation}
with $\bm{Y} _0$ independent of $\epsilon$.  Let $\bm{H} $ be a semimartingale with paths in $C([0,T], \mathbb{R}^d)$  and let $\bm{Y}$, with paths in $C([0,T], \mathbb{R}^{m + 2n})$, satisfy the stochastic integral equation
\begin{equation}
\label{KP equation 2}
\bm{Y}_t = \bm{Y} _0 + \int_0^t \bm{h} (\bm{Y}_s) d\bm{H} _s \; .
\end{equation}

\begin{lemma}[\textbf{\cite[Theorem 5.4 and Corollary 5.6]{Kurtz}}]
Suppose $(\bm{U}^{\epsilon}, \bm{H}^{\epsilon}) \rightarrow (\bm{0}, \bm{H})$ in probability with respect to $C([0,T], \mathbb{R}^{m + 2n} \times \mathbb{R}^d)$, i.e. for all $a > 0$,
\begin{equation} \label{KP assumption}
P \left[ \sup_{0 \leq s \leq T} \big(\| \bm{U}^{\epsilon}_s \| + \| \bm{H}^{\epsilon} _s - \bm{H}_s \|\big) > a \right] \rightarrow 0
\end{equation}
as $\epsilon \rightarrow 0$, and the following conditions are satisfied:

\begin{condition}
For every $t \in [0,T]$, the family of total variations evaluated at $t$,  $\{V_t(\bm{A}^{\epsilon}) \}$, is stochastically bounded, i.e. $P[ V_t (\bm{A}^{\epsilon}) > L] \rightarrow 0$ as $L \rightarrow \infty$, uniformly in $\epsilon$.
\end{condition}

\begin{condition}

\begin{enumerate}
\item $\sup_{\theta \in \mathbb{R}^{m + 2n}} \| \bm{h}^{\epsilon} (\theta) - \bm{h} (\theta) \| \rightarrow 0$ as $\epsilon \rightarrow 0$
\item $\bm{h}$ is continuous (see \cite[Example 5.3]{Kurtz})
\end{enumerate}

\end{condition}

Suppose that there exists a strongly unique global solution to equation~\eqref{KP equation 2}.  Then, as $\epsilon\rightarrow~0$,  $\bm{Y}^{\epsilon}~\rightarrow~\bm{Y}$ in probability with respect to  $C([0,T], \mathbb{R}^{m + 2n})$, i.e. for all $a > 0$,
$$P \left[ \sup_{0 \leq s \leq T} \| \bm{Y}^{\epsilon}_s - \bm{Y} _s \| > a \right] \rightarrow 0 \; \; \mathrm{as} \; \; \epsilon \rightarrow 0 \; .$$
\end{lemma}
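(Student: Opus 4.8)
The plan is to prove the statement along the standard route for convergence of solutions of semimartingale-driven equations: first upgrade the hypotheses to the \emph{uniformly controlled variations} (UCV) condition of Kurtz and Protter, then establish joint tightness of $(\bm{Y}^{\epsilon}, \bm{H}^{\epsilon})$, identify every subsequential limit as a solution of~\eqref{KP equation 2} via the continuity of the stochastic-integral map, and finally invoke strong uniqueness to pin down the limit. Because the driving inputs converge \emph{in probability} by~\eqref{KP assumption} and the limit is unique, the resulting convergence in distribution can be upgraded to convergence in probability, which is the asserted conclusion.

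First I would verify that $\{\bm{H}^{\epsilon}\}$ satisfies UCV. Condition~1 already supplies stochastic boundedness of the total variations $V_t(\bm{A}^{\epsilon})$ of the finite-variation parts. For the martingale parts, write $\bm{M}^{\epsilon} = \bm{H}^{\epsilon} - \bm{A}^{\epsilon}$: the uniform convergence $\bm{H}^{\epsilon} \to \bm{H}$ makes $\sup_{t \le T}\|\bm{H}^{\epsilon}_t\|$ stochastically bounded, and subtracting the stochastically bounded variation of $\bm{A}^{\epsilon}$ gives the same for $\sup_{t \le T}\|\bm{M}^{\epsilon}_t\|$; a standard localization argument, using that $\|\bm{M}^{\epsilon}\|^2 - \operatorname{tr}[\bm{M}^{\epsilon}]$ is a local martingale, then transfers this to stochastic boundedness of the quadratic variations $[\bm{M}^{\epsilon}]_T$. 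Together these bounds are exactly UCV for the continuous semimartingales $\bm{H}^{\epsilon}$.

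Next I would establish tightness of the laws of $(\bm{Y}^{\epsilon}, \bm{H}^{\epsilon})$ on $C([0,T], \mathbb{R}^{m+2n} \times \mathbb{R}^d)$, using the defining equation $\bm{Y}^{\epsilon}_t = \bm{Y}_0 + \bm{U}^{\epsilon}_t + \int_0^t \bm{h}^{\epsilon}(\bm{Y}^{\epsilon}_s)\,d\bm{H}^{\epsilon}_s$ together with the UCV bounds to control the modulus of continuity of the stochastic integral; Condition~2(1), namely $\sup_{\theta}\|\bm{h}^{\epsilon}(\theta) - \bm{h}(\theta)\| \to 0$, lets me replace $\bm{h}^{\epsilon}$ by the continuous $\bm{h}$ at the cost of a term that vanishes uniformly. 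Passing to any weakly convergent subsequence and applying the Kurtz--Protter continuity theorem for stochastic integrals (the result cited from~\cite{Kurtz}), the integrals $\int_0^t \bm{h}^{\epsilon}(\bm{Y}^{\epsilon}_s)\,d\bm{H}^{\epsilon}_s$ converge jointly with their integrators, so any subsequential limit $(\bm{Y},\bm{H})$ satisfies $\bm{Y}_t = \bm{Y}_0 + \int_0^t \bm{h}(\bm{Y}_s)\,d\bm{H}_s$, i.e.~\eqref{KP equation 2}. Strong uniqueness for~\eqref{KP equation 2} then forces every subsequential limit to equal $\bm{Y}$, giving convergence in distribution of the full family; to upgrade to convergence in probability I would exploit that the inputs converge in probability, realizing an arbitrary subsequence almost surely by the Skorohod representation theorem, identifying the almost sure limit as the unique solution driven by $\bm{H}$, and concluding that the limit is independent of the subsequence, which yields $\sup_{0 \le s \le T}\|\bm{Y}^{\epsilon}_s - \bm{Y}_s\| \to 0$ in probability.

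I expect the main obstacle to be the identification step, namely proving $\int_0^t \bm{h}^{\epsilon}(\bm{Y}^{\epsilon}_s)\,d\bm{H}^{\epsilon}_s \to \int_0^t \bm{h}(\bm{Y}_s)\,d\bm{H}_s$. The difficulty is the double dependence on $\epsilon$: both the integrator $\bm{H}^{\epsilon}$ and the integrand $\bm{h}^{\epsilon}(\bm{Y}^{\epsilon})$ vary, and the integrand involves the as-yet-unidentified solution $\bm{Y}^{\epsilon}$, so it cannot be treated as a fixed convergent sequence fed into a fixed integrator. The UCV condition is precisely what renders the stochastic-integral map jointly continuous in integrand and integrator, while Condition~2 together with the continuity of $\bm{h}$ controls the passage $\bm{h}^{\epsilon}(\bm{Y}^{\epsilon}) \to \bm{h}(\bm{Y})$; combining these two ingredients correctly — so that the joint weak limit really solves~\eqref{KP equation 2} — is the crux of the argument.
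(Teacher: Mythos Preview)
The paper does not prove this lemma; it is quoted directly from Kurtz and Protter \cite[Theorem~5.4 and Corollary~5.6]{Kurtz} as a black-box tool, so there is no ``paper's own proof'' to compare against. Your sketch is a reasonable outline of the Kurtz--Protter argument itself (UCV from Condition~1, continuity of the stochastic-integral map, identification of subsequential limits, and strong uniqueness to upgrade to convergence in probability), and in that sense it is consistent with what the cited reference does, but for the purposes of this paper no proof is required beyond the citation.
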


\subsection{Proof of Theorem 1}
We cannot apply Lemma~1 directly to equation~\eqref{integral form} because $\epsilon \bm{V}^{\epsilon}$ does not satisfy Condition~1. Instead, we integrate by parts the $i^{\mathrm{th}}$ component of the last integral in equation~\eqref{integral form}:
\begin{align} \label{rightafterbyparts}
 \int_ 0^t \sum_j \big(& (\bm{\gamma}(\bm{X}^{\epsilon} _s) - \epsilon \bm{\kappa} (\bm{X}^{\epsilon} _s))^{-1} \big) _{ij} \epsilon d(\bm{V}^{\epsilon} _s)_j =  \hspace{200pt} \notag \\
 & \sum_j \big( (\bm{\gamma}(\bm{X}^{\epsilon} _t) - \epsilon \bm{\kappa} (\bm{X}^{\epsilon} _t))^{-1} \big) _{ij} \epsilon (\bm{V}^{\epsilon} _t)_j 
- \sum_j \big( (\bm{\gamma}(\bm{X}_0) - \epsilon \bm{\kappa} (\bm{X}_0))^{-1} \big) _{ij} \epsilon (\bm{V} ^{\epsilon} _0)_j \notag \\
&- \int_0^t \sum_{\ell , j} \frac{\partial}{\partial X_{\ell}} \big[ \big( (\bm{\gamma}(\bm{X}^{\epsilon} _s) - \epsilon \bm{\kappa} (\bm{X}^{\epsilon} _s))^{-1} \big) _{ij} \big] \epsilon (\bm{V}^{\epsilon} _s)_j d(\bm{X}^{\epsilon} _s)_{\ell}
\end{align}
where $\bm{V} ^{\epsilon} _0 = (\bm{v}_0, \bm{\eta} ^{\epsilon} _0, \bm{z} ^{\epsilon} _0)$.  Note that
$$
d \big[ \big( (\bm{\gamma}(\bm{X}^{\epsilon} _s) - \epsilon \bm{\kappa} (\bm{X}^{\epsilon} _s))^{-1} \big) _{ij} \big] = \sum_{\ell} \frac{\partial}{\partial X_{\ell}} \big[ \big( (\bm{\gamma}(\bm{X}^{\epsilon} _s) - \epsilon \bm{\kappa} (\bm{X}^{\epsilon} _s))^{-1} \big) _{ij} \big]   d(\bm{X}^{\epsilon} _s)_{\ell}
$$
because $\bm{X}^{\epsilon} _s$ is continuously differentiable. The It\^o term in the integration by parts formula is zero for a similar reason.

Since $d(\bm{X}^{\epsilon} _s)_{\ell} = (\bm{V}^{\epsilon} _s)_{\ell} \; ds$, we can write the last integral in equation~\eqref{rightafterbyparts} as
$$ 
\int_0^t \sum_{\ell , j} \frac{\partial}{\partial X_{\ell}} \big[ \big( (\bm{\gamma}(\bm{X}^{\epsilon} _s) - \epsilon \bm{\kappa} (\bm{X}^{\epsilon} _s))^{-1} \big) _{ij} \big] \epsilon (\bm{V}^{\epsilon} _s)_j (\bm{V}^{\epsilon} _s)_{\ell} \; ds \; .
$$
The product $\epsilon (\bm{V}^{\epsilon} _s)_j (\bm{V}^{\epsilon} _s)_{\ell}$ that appears in the above integral is the $(j, \ell)$ entry of the outer product matrix $\epsilon \bm{V}^{\epsilon} _s (\bm{V}^{\epsilon} _s)^{\rm T}$.  Our next step is to express this matrix as the solution of a certain equation.
We start by using the It\^o product formula to calculate
$$
d[\epsilon \bm{V}^{\epsilon} _s (\epsilon \bm{V}^{\epsilon} _s)^{\rm T}] = \epsilon (d( \bm{V}^{\epsilon} _s))(\epsilon \bm{V}^{\epsilon} _s)^{\rm T} + \epsilon \bm{V}^{\epsilon} _s (\epsilon d(\bm{V}^{\epsilon} _s)^{\rm T}) + d(\epsilon \bm{V}^{\epsilon} _s)d(\epsilon \bm{V}^{\epsilon} _s)^{\rm T},
$$
so that, using equation~\eqref{vector form},
\begin{align}
\label{differential}
d[\epsilon \bm{V}^{\epsilon} _s (\epsilon \bm{V}^{\epsilon} _s)^{\rm T}] & =  [\epsilon \bm{F} (\bm{X}^{\epsilon} _s) (\bm{V}^{\epsilon} _s)^{\rm T} - \epsilon \bm{\gamma} (\bm{X}^{\epsilon} _s)\bm{V}^{\epsilon} _s (\bm{V}^{\epsilon} _s)^{\rm T} + \epsilon ^2 \bm{\kappa} (\bm{X}^{\epsilon} _s)\bm{V}^{\epsilon} _s (\bm{V}^{\epsilon} _s)^{\rm T}] ds \notag \\
 & +  \epsilon \bm{\sigma} d \bm{W}_s (\bm{V}^{\epsilon} _s)^{\rm T}  \\
 & +  [\epsilon \bm{V}^{\epsilon} _s (\bm{F} (\bm{X}^{\epsilon} _s))^{\rm T}  - \epsilon \bm{V}^{\epsilon} _s (\bm{V}^{\epsilon} _s)^{\rm T} (\bm{\gamma} (\bm{X}^{\epsilon} _s))^{\rm T} + \epsilon ^2 \bm{V}^{\epsilon} _s (\bm{V}^{\epsilon} _s)^{\rm T} (\bm{\kappa} (\bm{X}^{\epsilon} _s))^{\rm T}]ds \notag \\
 & +  \epsilon \bm{V}^{\epsilon} _s (\bm{\sigma} d \bm{W}_s)^{\rm T} + \bm{\sigma} \bm{\sigma}^{\rm T} ds \; . \notag
\end{align}
Defining
\begin{equation} \label{Utildeequation}
\tilde{\bm{U}}^{\epsilon} _t = \int _0^t [ \epsilon \bm{V}^{\epsilon} _s (\bm{F} (\bm{X}^{\epsilon} _s))^{\rm T} + \epsilon \bm{V}^{\epsilon} _s (\epsilon \bm{V}^{\epsilon} _s)^{\rm T} (\bm{\kappa} (\bm{X}^{\epsilon} _s))^{\rm T}] ds + \int_0 ^t \epsilon \bm{V}^{\epsilon} _s (\bm{\sigma} d \bm{W}_s)^{\rm T}
\end{equation}
and combining (\ref{differential}) and (\ref{Utildeequation}), we obtain
\begin{align} \label{rightafterbyparts2}
& - \epsilon \bm{V}^{\epsilon} _t (\bm{V}^{\epsilon} _t)^{\rm T} (\bm{\gamma} (\bm{X}^{\epsilon} _t))^{\rm T} dt - \epsilon \bm{\gamma} (\bm{X}^{\epsilon} _t)\bm{V}^{\epsilon} _t (\bm{V}^{\epsilon} _t)^{\rm T} dt  \notag \\
&= d[\epsilon \bm{V}^{\epsilon} _t (\epsilon \bm{V}^{\epsilon} _t)^{\rm T}] - \bm{\sigma} \bm{\sigma}^{\rm T} dt - d \tilde{\bm{U}} ^{\epsilon} _t - d (\tilde{\bm{U}} ^{\epsilon} _ t )^{\rm T} \; .
\end{align}
Our goal is to write the differential $ \epsilon \bm{V}^{\epsilon} _t (\bm{V}^{\epsilon} _t)^{\rm T} dt$ in another form and substitute it back into equation~\eqref{rightafterbyparts}.  Letting $\Delta t > 0$, we integrate (\ref{rightafterbyparts2}) to obtain
\begin{align} 
\label{integraleqafterbyparts1}
& - \int _t ^{t + \Delta t} \epsilon \bm{V}^{\epsilon} _s (\bm{V}^{\epsilon} _s)^{\rm T} (\bm{\gamma} (\bm{X}^{\epsilon} _s))^{\rm T} ds - \int_t ^{t + \Delta t} \epsilon \bm{\gamma} (\bm{X}^{\epsilon} _s)\bm{V}^{\epsilon} _s (\bm{V}^{\epsilon} _s)^{\rm T} ds  \notag \\
&= \int _t ^{t + \Delta t} \left( d[\epsilon \bm{V}^{\epsilon} _s (\epsilon \bm{V}^{\epsilon} _s)^{\rm T}] - \bm{\sigma} \bm{\sigma}^{\rm T} ds - d \tilde{\bm{U}} ^{\epsilon} _s - d (\tilde{\bm{U}} ^{\epsilon} _ s)^{\rm T} \right) \; .
\end{align}
Defining
$$\bm{E} (t, \Delta t) = \int _t ^{t + \Delta t} \epsilon \bm{V}^{\epsilon} _s (\bm{V}^{\epsilon} _s)^{\rm T} (\bm{\gamma} (\bm{X}^{\epsilon} _s))^{\rm T} ds - \epsilon \bm{V}^{\epsilon} _t (\bm{V}^{\epsilon} _t)^{\rm T} (\bm{\gamma} (\bm{X}^{\epsilon} _t))^{\rm T} \Delta t \; ,$$
we write (\ref{integraleqafterbyparts1}) as
\begin{align}
\label{integraleqafterbyparts2}
& - \epsilon \bm{V}^{\epsilon} _t (\bm{V}^{\epsilon} _t)^{\rm T} (\bm{\gamma} (\bm{X}^{\epsilon} _t))^{\rm T} \Delta t - \epsilon \bm{\gamma} (\bm{X}^{\epsilon} _t)\bm{V}^{\epsilon} _t (\bm{V}^{\epsilon} _t)^{\rm T} \Delta t  \notag \\
&= \int _t ^{t + \Delta t} \left( d[\epsilon \bm{V}^{\epsilon} _s (\epsilon \bm{V}^{\epsilon} _s)^{\rm T}] - \bm{\sigma} \bm{\sigma}^{\rm T} ds - d \tilde{\bm{U}} ^{\epsilon} _s - d (\tilde{\bm{U}} ^{\epsilon} _ s)^{\rm T} \right) + \bm{E} (t, \Delta t) + (\bm{E} (t, \Delta t))^T \; .
\end{align}  
Letting $\bm{A} = - \bm{\gamma} (\bm{X}^{\epsilon} _t)$, $\bm{B} = \epsilon \bm{V}^{\epsilon} _t (\bm{V}^{\epsilon} _t)^{\rm T} \Delta t$, and 
$$\bm{C} = \int _t ^{t + \Delta t} \left( d[\epsilon \bm{V}^{\epsilon} _s (\epsilon \bm{V}^{\epsilon} _s)^{\rm T}] - \bm{\sigma} \bm{\sigma}^{\rm T} ds - d \tilde{\bm{U}} ^{\epsilon} _s - d (\tilde{\bm{U}} ^{\epsilon} _ s)^{\rm T} \right) \; + \; \bm{E} (t, \Delta t) \; + \; (\bm{E} (t, \Delta t)) ^T \; ,$$ 
equation~\eqref{integraleqafterbyparts2} becomes
\begin{equation*}
\bm{A}\bm{B} + \bm{B}\bm{A}^{\rm T} = \bm{C} \; .
\end{equation*}
An equation of this form  (to be solved for $\bm{B}$) is called Lyapunov's equation \cite{Bellman, Ortega}. By Ref.~\cite[Theorem~6.4.2]{Ortega}, if the real parts of all eigenvalues of $\bm{A}$ are negative, it has a unique solution 
\begin{equation*}
\bm{B} = - \int_0 ^{\infty} e^{\bm{A}y}\bm{C} e^{\bm{A}^{\rm T} y} dy
\end{equation*}
for any $\bm{C}$.  The eigenvalues of $\bm{\gamma} (\bm{X}^{\epsilon} _t)$ are 
\begin{equation}
\label{eigenvaluesgamma}
 \frac{1}{c_i}, \hspace{5pt} i = 1, ..., m, \hspace{15pt} \mathrm{and} \hspace{15pt} \frac{\Gamma ^2}{2k_j \Omega ^2} \left[1 \pm \sqrt{1 - 4 \frac{\Omega ^2}{\Gamma ^2}} \right], \hspace{5pt}  j = 1, ..., n\; ;
 \end{equation}
 in particular, they do not depend on $\bm{X}^{\epsilon} _t$ and have positive real parts (since $c_i > 0$ and $k_j > 0$ for $i = 1,..., m, \; j = 1, ..., n$).  Thus, all eigenvalues of $\bm{A} = - \bm{\gamma} (\bm{X}^{\epsilon} _t)$ have negative real parts, so we have
\begin{align*}
 \epsilon \bm{V}^{\epsilon} _t (\bm{V}^{\epsilon} _t)^{\rm T} \Delta t  = -& \int_0^{\infty}  e^{-\bm{\gamma} (\bm{X}^{\epsilon} _t)y} \Bigg( \int _t ^{t + \Delta t} \Big( d[\epsilon \bm{V}^{\epsilon} _s (\epsilon \bm{V}^{\epsilon} _s)^{\rm T}] - \bm{\sigma} \bm{\sigma}^{\rm T} ds \\
& - d \tilde{\bm{U}} ^{\epsilon} _s - d (\tilde{\bm{U}} ^{\epsilon} _ s)^{\rm T} \Big) + \bm{E} (t, \Delta t) + ( \bm{E} (t, \Delta t))^T \Bigg) e^{-(\bm{\gamma} (\bm{X}^{\epsilon} _t))^{\rm T} y} dy \; .
\end{align*}
Now, for $0 \leq a < b \leq T$ and $N \in \mathbb{N}$, let $\Delta t = (b - a) / N$ and let $\{t_i : 0 \leq i \leq N \}$ be the partition of $[a, b ]$ such that $t_0 = a$, $t_N = b$, and $t_{i + 1} - t_i = \Delta t$ for $0 \leq i \leq N - 1$.  Then
\begin{align*}
& \sum _{i = 0} ^{N - 1} \epsilon  \left( \bm{V}^{\epsilon} _{t _i} (\bm{V}^{\epsilon} _{t_i})^{\rm T} \right) _{j \ell} \Delta t \; = \\
& - \sum _{k_1 , k_2} \int_0^{\infty} ( e^{-\bm{\gamma} (\bm{X}^{\epsilon} _t)y} ) _{j k_1} \Bigg( \int _{a} ^{b} \Big( d[\epsilon \bm{V}^{\epsilon} _s (\epsilon \bm{V}^{\epsilon} _s)^{\rm T} ] - \bm{\sigma} \bm{\sigma}^{\rm T} ds \\
& \hspace{150pt} - d \tilde{\bm{U}} ^{\epsilon} _s  - d (\tilde{\bm{U}} ^{\epsilon} _ s)^{\rm T} \Big) _{k_1 k_2} \Bigg)(e^{-(\bm{\gamma} (\bm{X}^{\epsilon} _t))^{\rm T} y}) _{k_2 \ell} \; dy \\
& - \sum _{k_1 , k_2} \int_0^{\infty} (e^{-\bm{\gamma} (\bm{X}^{\epsilon} _t)y}) _{j k_1} \Bigg( \sum_{i = 0} ^{N - 1} \Big( \bm{E} (t_i, \Delta t) + (\bm{E} (t_i, \Delta t))^T \Big) _{k_1 k_2} \Bigg) (e^{-(\bm{\gamma} (\bm{X}^{\epsilon} _t))^{\rm T} y}) _{k_2 \ell} \; dy \\
& = \; - \sum _{k_1 , k_2} \int _{a} ^{b} \int_0^{\infty} (e^{-\bm{\gamma} (\bm{X}^{\epsilon} _t)y})_{j k_1} (e^{-(\bm{\gamma} (\bm{X}^{\epsilon} _t))^{\rm T} y})_{k_2 \ell} \; dy \Big( d[\epsilon \bm{V}^{\epsilon} _s (\epsilon \bm{V}^{\epsilon} _s)^{\rm T}] \\
& \hspace{200pt} - \bm{\sigma} \bm{\sigma}^{\rm T} ds - d \tilde{\bm{U}} ^{\epsilon} _s - d (\tilde{\bm{U}} ^{\epsilon} _ s)^{\rm T}\Big) _{k_1 k_2} \\
& \hspace{12pt} - \sum _{k_1 , k_2} \int_0^{\infty} (e^{-\bm{\gamma} (\bm{X}^{\epsilon} _t)y})_{j k_1}  \Bigg( \sum_{i = 0} ^{N - 1} \Big( \bm{E} (t_i, \Delta t) + ( \bm{E} (t_i, \Delta t))^T \Big) _{k_1 k_2} \Bigg) (e^{-(\bm{\gamma} (\bm{X}^{\epsilon} _t))^{\rm T} y})_{k_2 \ell} \; dy
\end{align*}
where the second equality follows from the stochastic Fubini's Theorem~\cite[Chapter IV, Theorem 46] {Protter}.
Fix the $\omega$.  Since the corresponding realization of the process $(\bm{X}^{\epsilon} , \bm{V}^{\epsilon})$ is continuous and $\bm{\gamma}$ is continuous, for every  $\alpha > 0$ there exists  $\delta > 0$ (depending on $\omega$) such that 
$$ \| \bm{V}^{\epsilon} _s (\bm{V}^{\epsilon} _s)^{\rm T} (\bm{\gamma} (\bm{X}^{\epsilon} _s))^{\rm T} - \bm{V}^{\epsilon} _u (\bm{V}^{\epsilon} _u)^{\rm T} (\bm{\gamma} (\bm{X}^{\epsilon} _u))^{\rm T} \| \leq \alpha$$
for $|s - u| < \delta$, $s,u \in [a, b]$.  Thus, for $\Delta t < \delta$, we have $ \| \bm{E} (t_i , \Delta t) \| \leq \Delta t \alpha$ for all $0 \leq i \leq N - 1$.  Taking the limit $\Delta t \rightarrow 0$ (i.e. taking $N \rightarrow \infty$), we have
\begin{align*}
& \Bigg\| \int _{a}^{b} \epsilon \left( \bm{V}^{\epsilon} _{s} (\bm{V}^{\epsilon} _{s})^{\rm T} \right) _{j \ell} ds + \sum _{k_1 , k_2} \int _{a} ^{b} \int_0^{\infty} (e^{-\bm{\gamma} (\bm{X}^{\epsilon} _t)y})_{j k_1} \times \\
& \hspace{20pt} (e^{-(\bm{\gamma} (\bm{X}^{\epsilon} _t))^{\rm T} y}) _{k_2 \ell} \; dy \Big( d[\epsilon \bm{V}^{\epsilon} _s (\epsilon \bm{V}^{\epsilon} _s)^{\rm T}] - \bm{\sigma} \bm{\sigma}^{\rm T} ds - d \tilde{\bm{U}} ^{\epsilon} _s - d (\tilde{\bm{U}} ^{\epsilon} _ s)^{\rm T}\Big) _{k_1 k_2}  \Bigg\| \leq C \alpha \; .
\end{align*}
Since $\alpha$ was arbitrary, the expression on the left-hand side is zero for the fixed (arbitrary) $\omega$.  Taking the derivative with respect to $b$, rearranging, and going back to matrix form, we get 
\begin{align*}
\epsilon \bm{V}^{\epsilon} _t (\bm{V}^{\epsilon} _t)^{\rm T} dt  &= - \int_0^{\infty} e^{-\bm{\gamma} (\bm{X}^{\epsilon} _t)y} \Big( d[\epsilon \bm{V}^{\epsilon} _t (\epsilon \bm{V}^{\epsilon} _t)^{\rm T}] \\
 &- \bm{\sigma} \bm{\sigma}^{\rm T} dt - d \tilde{\bm{U}} ^{\epsilon} _t - d (\tilde{\bm{U}} ^{\epsilon} _ t )^{\rm T} \Big) e^{-(\bm{\gamma} (\bm{X}^{\epsilon} _t))^{\rm T} y} dy \\
&= - \underbrace{ \int_0^{\infty} e^{-\bm{\gamma} (\bm{X}^{\epsilon} _t)y} d[\epsilon \bm{V}^{\epsilon} _t (\epsilon \bm{V}^{\epsilon} _t)^{\rm T}] e^{-(\bm{\gamma} (\bm{X}^{\epsilon} _t))^{\rm T} y} dy}_{d \bm{C} ^1 _t} \\
 &+ \underbrace{ \int_0^{\infty} e^{-\bm{\gamma} (\bm{X}^{\epsilon} _t)y} (\bm{\sigma} \bm{\sigma}^{\rm T} dt)  e^{- (\bm{\gamma} (\bm{X}^{\epsilon} _t))^{\rm T} y} dy}_{d \bm{C} ^2 _t} \\
 &+ \underbrace{ \int_0^{\infty} e^{-\bm{\gamma} (\bm{X}^{\epsilon} _t)y} ( d \tilde{\bm{U}} ^{\epsilon} _t + d (\tilde{\bm{U}} ^{\epsilon} _ t )^{\rm T}) e^{-(\bm{\gamma} (\bm{X}^{\epsilon} _t))^{\rm T} y} dy}_{d \bm{C} ^3 _t} .
\end{align*}
After substituting the above expression into equation~\eqref{rightafterbyparts}, a part of the term containing $d \bm{C} ^1 _t$ will be included in the function $\bm{h}^{\epsilon}$ (in the notation of Lemma~1) and the other part will be included in the differential of the $\bm{H}^{\epsilon}$ process. Neither of them will contribute to the limiting equation~\eqref{thm limiting equation}.  The term containing $d \bm{C} ^3 _t$ will become a part of $\bm{U}^{\epsilon}_t$, which will be shown to converge to zero, and so this term will not contribute either.  The  noise-induced drift term will come from the term containing $d \bm{C} ^2 _t$.

First, we have 
\begin{align*}
 (d \bm{C}^1_t) _{j \ell} &= \sum_{k_1 , k_2} \int_0^{\infty} (e^{-\bm{\gamma} (\bm{X}^{\epsilon} _t)y}) _{j k_1} d[(\epsilon \bm{V}^{\epsilon} _t)_{k_1} (\epsilon \bm{V}^{\epsilon} _t)^{\rm T}_{k_2}] (e^{-(\bm{\gamma} (\bm{X}^{\epsilon} _t))^{\rm T} y})_{k_2 \ell} \; dy \\
&= \sum_{k_1 , k_2} d[(\epsilon \bm{V}^{\epsilon} _t)_{k_1} (\epsilon \bm{V}^{\epsilon} _t)^{\rm T}_{k_2}]  \int_0^{\infty} (e^{-\bm{\gamma} (\bm{X}^{\epsilon} _t)y}) _{j k_1} (e^{-(\bm{\gamma} (\bm{X}^{\epsilon} _t))^{\rm T} y})_{k_2 \ell} \; dy \; .
\end{align*}
Next, we have $d \bm{C}^2 _t = \bm{J} (\bm{X}^{\epsilon} _t) dt$ where $\bm{J}$ is the unique solution of the Lyapunov equation
\begin{equation} \label{Lyapunov}
\bm{J} \bm{\gamma} ^{\rm T} + \bm{\gamma} \bm{J} = \bm{\sigma} \bm{\sigma} ^{\rm T} .
\end{equation}
Finally, using equation~\eqref{Utildeequation} for $\tilde{\bm{U}}^{\epsilon}$ we see that 
 \begin{align*}
 (d\bm{C}^3 _t)_{j \ell} &= \sum_{k_1 , k_2} \Bigg[ \int _0^{\infty} (e^{- \bm{\gamma}(\bm{X}^{\epsilon} _t)y})_{j k_1}(e^{- (\bm{\gamma} (\bm{X}^{\epsilon} _t))^{\rm T} y})_{k_2 \ell} \; dy \Big( [\epsilon \bm{V} ^{\epsilon} _t (\bm{F} (\bm{X}^{\epsilon} _t))^{\rm T}] _{k_1 k_2} dt  \notag \\ 
&+ \; [\epsilon  \bm{V}^{\epsilon} _t ( \epsilon \bm{V}^{\epsilon} _t)^{\rm T} (\bm{\kappa} (\bm{X}^{\epsilon} _t))^{\rm T}] _{k_1 k_2} dt \; + \;  [\epsilon \bm{V}^{\epsilon} _t (\bm{\sigma} d \bm{W} _t)^{\rm T} ] _{k_1 k_2}   \notag \\
&+ \; [\bm{F} (\bm{X} ^{\epsilon} _t)(\epsilon \bm{V}^{\epsilon} _t)^{\rm T} ] _{k_1 k_2} dt \; + \; [ \bm{\kappa} (\bm{X}^{\epsilon} _t) \epsilon \bm{V}^{\epsilon} _t (\epsilon \bm{V}^{\epsilon} _t)^{\rm T} ]_{k_1 k_2} dt \notag \\
& + \; [ \bm{\sigma} d \bm{W} _t (\epsilon \bm{V}^{\epsilon} _t)^{\rm T} ] _{k_1 k_2} \Big) \Bigg] \; .
 \end{align*}

We are now ready to rewrite equation~\eqref{integral form} and apply Lemma~1.  After substituting the expression for $\epsilon \bm{V}^{\epsilon} _t (\bm{V} ^{\epsilon} _t)^{\rm T} dt$ into equation~\eqref{rightafterbyparts}, equation~\eqref{integral form} becomes
\begin{align} \label{afterbyparts}
 (\bm{X}^{\epsilon} _t)_i & = (\bm{X}_0)_i + (\bm{U}^{\epsilon} _t)_i + \int_0^t \big( ( \bm{\gamma}(\bm{X}^{\epsilon} _s) - \epsilon \bm{\kappa} (\bm{X}^{\epsilon} _s))^{-1} \bm{F}(\bm{X}^{\epsilon} _s) \big) _i \; ds \notag \\
& + \left( \int_0^t (\bm{\gamma}(\bm{X}^{\epsilon} _s) - \epsilon \bm{\kappa} (\bm{X}^{\epsilon} _s))^{-1} \bm{\sigma}  d \bm{W} _s \right) _i \notag \\
& + \sum_{\ell , j} \int_0^t \frac{\partial}{\partial X_{\ell}} \big[ \big( ( \bm{\gamma}(\bm{X}^{\epsilon} _s) - \epsilon \bm{\kappa} (\bm{X}^{\epsilon} _s))^{-1} \big) _{ij} \big] \bm{J}_{j \ell} (\bm{X} ^{\epsilon} _s) ds \notag \\
& + \sum_{\ell , j} \Bigg[ \int_0^t \frac{\partial}{\partial X_{\ell}} \big[ \big( ( \bm{\gamma}(\bm{X}^{\epsilon} _s) - \epsilon \bm{\kappa} (\bm{X}^{\epsilon} _s))^{-1} \big) _{ij} \big] \; \times  \notag \\
& \sum_{k_1 , k_2} \left(- \int_0^{\infty} (e^{- \bm{\gamma}(\bm{X}^{\epsilon} _s)y})_{jk_1}(e^{- (\bm{\gamma} (\bm{X}^{\epsilon} _s ))^{\rm T} y})_{k_2 \ell} \; dy \right) d[(\epsilon \bm{V}^{\epsilon} _s)_{k_1} (\epsilon \bm{V}^{\epsilon} _s)^{\rm T} _{k_2}] \Bigg]
\end{align}
where the components of $\bm{U}^{\epsilon} _t$ are
\begin{align}
\label{Uequation}
(\bm{U}^{\epsilon} _t)_i & =  - \sum_j \big( ( \bm{\gamma}(\bm{X}^{\epsilon} _t) - \epsilon \bm{\kappa} (\bm{X}^{\epsilon} _t))^{-1} \big) _{ij} \epsilon (\bm{V}^{\epsilon} _t)_j + \sum_j \big( ( \bm{\gamma}(\bm{X} _0) - \epsilon \bm{\kappa} (\bm{X}_0))^{-1} \big) _{ij} \epsilon (\bm{V} ^{\epsilon} _0)_j \notag \\
& + \sum_{\ell , j} \Bigg[ \int_0^t \frac{\partial}{\partial X_{\ell}} \big[ \big( ( \bm{\gamma}(\bm{X}^{\epsilon} _s) - \epsilon \bm{\kappa} (\bm{X}^{\epsilon} _s))^{-1} \big) _{ij} \big] \; \times \notag \\
& \sum_{k_1 , k_2} \Big[ \int_0^{\infty} (e^{- \bm{\gamma}(\bm{X}^{\epsilon} _s)y})_{jk_1}(e^{- (\bm{\gamma} (\bm{X}^{\epsilon} _s ))^{\rm T} y})_{k_2 \ell} \; dy \; \times \notag \\
& \Big( [\epsilon \bm{V} ^{\epsilon} _s (\bm{F} (\bm{X}^{\epsilon} _s))^{\rm T}] _{k_1 k_2} ds \; + \; [\epsilon  \bm{V}^{\epsilon} _s ( \epsilon \bm{V}^{\epsilon} _s)^{\rm T} (\bm{\kappa} (\bm{X}^{\epsilon} _s))^{\rm T}] _{k_1 k_2} ds \notag \\
&+ \; [\epsilon \bm{V}^{\epsilon} _s (\bm{\sigma} d \bm{W} _s)^{\rm T} ] _{k_1 k_2} \; + \; [\bm{F} (\bm{X} ^{\epsilon} _s)(\epsilon \bm{V}^{\epsilon} _s)^{\rm T} ] _{k_1 k_2} ds \notag \\
&+ \; [ \bm{\kappa} (\bm{X}^{\epsilon} _s) \epsilon \bm{V}^{\epsilon} _s (\epsilon \bm{V}^{\epsilon} _s)^{\rm T} ]_{k_1 k_2} ds \; + \; [ \bm{\sigma} d \bm{W} _s (\epsilon \bm{V}^{\epsilon} _s)^{\rm T} ] _{k_1 k_2} \Big) \Big] \Bigg] \; .
\end{align}
We can now write equation~\eqref{afterbyparts} in the form of Lemma~1
\begin{equation*}
\bm{Y}^{\epsilon}_t = \bm{Y}_0 + \bm{U}^{\epsilon} _t + \int _0 ^t \bm{h}^{\epsilon} (\bm{Y}^{\epsilon} _s) d \bm{H}^{\epsilon} _s
\end{equation*}
by letting $\bm{h}^{\epsilon} : \mathbb{R}^{(m + 2n)} \rightarrow \mathbb{R}^{(m + 2n) \times (1 + n + 1 + (m + 2n)^2)}$ be the matrix-valued function given by
\begin{equation} \label{h epsilon}
\bm{h}^{\epsilon}(\bm{Y}) = \Big( ( \bm{\gamma}(\bm{Y}) - \epsilon \bm{\kappa} (\bm{Y}))^{-1} \bm{F}(\bm{Y}), ( \bm{\gamma}(\bm{Y}) - \epsilon \bm{\kappa} (\bm{Y}))^{-1} \bm{\sigma}, \bm{S}^{\epsilon} (\bm{Y}), \bm{\Lambda}^1 (\bm{Y}), \; ... \; , \bm{\Lambda}^{m + 2n} (\bm{Y}) \Big)
\end{equation}
where $\bm{S}^{\epsilon}: \mathbb{R}^{(m + 2n)} \rightarrow \mathbb{R}^{(m + 2n)}$ is the vector-valued function defined componentwise as
\begin{equation*}
S^{\epsilon}_i (\bm{Y}) = \sum_{\ell , j} \frac{\partial}{\partial Y_{\ell}} \big[ \big( ( \bm{\gamma}(\bm{Y}) - \epsilon \bm{\kappa} (\bm{Y}))^{-1} \big) _{ij} \big] J_{j \ell} (\bm{Y})
\end{equation*}
with $\bm{J}$ denoting the solution to equation~\eqref{Lyapunov}, and $\bm{\Lambda} ^{k_2} : \mathbb{R}^{(m + 2n)} \rightarrow \mathbb{R}^{(m + 2n) \times (m + 2n)}$ is defined componentwise as
\begin{equation*}
\Lambda ^{k_2} _{i k_1} (\bm{Y}) = \sum_{\ell , j} \frac{\partial}{\partial Y_{\ell}} \big[ \big( ( \bm{\gamma}(\bm{Y}) - \epsilon \bm{\kappa} (\bm{Y}))^{-1} \big) _{ij} \big] \left[ - \int _0 ^{\infty} (e^{- \bm{\gamma} (\bm{Y}) y})_{jk_1} (e^{- (\bm{\gamma} (\bm{Y}))^{\rm T} y})_{k_2 \ell} \; dy \right] ,
\end{equation*}
and by letting $\bm{H} ^{\epsilon}$ be the process, with paths in $C([0,T], \mathbb{R}^{1 + n + 1 + (m + 2n)^2})$, given by
\begin{equation} \label{H epsilon}
\bm{H}^{\epsilon} _t =
\left[\begin{array}{c}
t  \\
\bm{W}_t \\
t \\
(\epsilon \bm{V}^{\epsilon} _t)_1 \epsilon \bm{V}^{\epsilon} _t - ( \epsilon \bm{V}^{\epsilon} _0)_1 \epsilon \bm{V}^{\epsilon} _0 \\
\vdots \\
(\epsilon \bm{V}^{\epsilon} _t)_{(m + 2n)} \epsilon \bm{V}^{\epsilon} _t - ( \epsilon \bm{V}^{\epsilon} _0)_{(m + 2n)} \epsilon \bm{V}^{\epsilon} _0
\end{array}\right] .
\end{equation}

We now define 
\begin{equation} \label{h}
\bm{h} (\bm{Y}) = \Big( ( \bm{\gamma}(\bm{Y}))^{-1} \bm{F}(\bm{Y}), ( \bm{\gamma}(\bm{Y}))^{-1} \bm{\sigma}, \bm{S} (\bm{Y}), \bm{\Psi}^1 (\bm{Y}), \; ... \; , \bm{\Psi}^{m + 2n} (\bm{Y}) \Big)
\end{equation}
where $\bm{S}$ is defined componentwise as
\begin{equation*}
S_i (\bm{Y}) = \sum_{\ell , j} \frac{\partial}{\partial Y_{\ell}} \big[ \big( ( \bm{\gamma}(\bm{Y}) )^{-1} \big) _{ij} \big] J_{j \ell} (\bm{Y})
\end{equation*}
and $\bm{\Psi} ^{k_2}$ is defined componentwise as
\begin{equation*}
\Psi ^{k_2} _{i k_1} (\bm{Y}) = \sum_{\ell , j} \frac{\partial}{\partial Y_{\ell}} \big[ \big( ( \bm{\gamma}(\bm{Y}))^{-1} \big) _{ij} \big] \left[ - \int _0 ^{\infty} (e^{- \bm{\gamma} (\bm{Y}) y})_{jk_1} (e^{- (\bm{\gamma} (\bm{Y}))^{\rm T} y})_{k_2 \ell} \; dy \right] .
\end{equation*}
Letting
\begin{equation} \label{H}
\bm{H} _t =
\left[\begin{array}{c}
t  \\
\bm{W}_t \\
t \\
\bm{0} \\
\vdots \\
\bm{0}
\end{array}\right],
\end{equation}
we show in the next section that $\bm{U}^{\epsilon} $, $\bm{h}^{\epsilon}$, $\bm{H} ^{\epsilon}$, $\bm{h}$, and $\bm{H}$ satisfy the assumptions of Lemma~1.
It follows that, as $\epsilon~\rightarrow~0$, $\bm{X}^{\epsilon}$ converges to the solution of the equation
\begin{equation} \label{limiting equation 1}
d \bm{X} _t = \left[(\bm{\gamma} (\bm{X} _t))^{-1} \bm{F} (\bm{X} _t) + \bm{S} (\bm{X} _t) \right] dt + (\bm{\gamma} (\bm{X} _t))^{-1} \bm{\sigma} d \bm{W} _t \; .
\end{equation}
Letting $\bm{X} _t = (\bm{y}_t, \bm{\xi} _t, \bm{\zeta} _t)$ (i.e., analogously to $\bm{X}^{\epsilon} _t$, we let $\bm{y}_t$ stand for the vector of the first $m$ components of $\bm{X} _t$, $\bm{\xi} _t$ stand for the vector of the next $n$ components, and $\bm{\zeta}_t$ stand for the vector of the last $n$ components), we have
\begin{equation}
\label{gamma inverse}
(\bm{\gamma} (\bm{X} _t))^{-1} =
\left[
\setlength{\extrarowheight}{5pt}
\begin{array}{ccc}
(\bm{D}^1)^{-1} & \tilde{ \bm{g}}(\bm{y _t}) &  \frac{1}{\Gamma} \tilde{ \bm{g}}(\bm{y _t})  \\
\bm{0} & (\bm{D}^2) ^{-1} &  \frac{1}{\Gamma} (\bm{D}^2)^{-1} \\
\bm{0} &  - \frac{\Omega ^2}{\Gamma} (\bm{D} ^2)^{-1} &  \bm{0}
\end{array}
\setlength{\extrarowheight}{5pt}
\right]
\end{equation}
where $(\tilde{ \bm{g}} (\bm{y_t}))_{ij} = k_j g^{ij} (\bm{y_t})$.
Thus, from \eqref{limiting equation 1}, we obtain the following limiting equation for $\bm{y}$
\begin{align} \label{limiting equation 2}
d y^i _t = f^i (\bm{y} _t) dt + \sum _{p,j} g^{pj} (\bm{y} _t) \frac{\partial g^{ij} (\bm{y} _t)}{\partial y_p} & \left[ \frac{\frac{\Gamma}{\Omega ^2} \frac{\delta _p}{\tau _j} + \frac{1}{\Gamma} \left(1 - \frac{\delta _p}{\tau_j} \right)}{2 \left( \frac{\Gamma}{\Omega ^2} \frac{\delta _p}{\tau _j} \left(1 + \frac{\delta _p}{\tau _j} \right) + \frac{1}{\Gamma} \right) } \right] dt \\[1em]
&+ \sum_j g^{ij} (\bm{y}_t) dW^j _t \; . \notag
\end{align}  
Taking the limit $\Gamma, \Omega ^2 \rightarrow \infty$ while keeping $\frac{\Gamma}{\Omega ^2}$ constant, this becomes
\begin{equation}
d y^i _t = f^i (\bm{y} _t) dt + \sum _{p,j} g^{pj} (\bm{y} _t) \frac{\partial g^{ij} (\bm{y} _t)}{\partial y_p} {1\over2} \left( 1 + \frac{\delta _p}{\tau _j} \right)^{-1} dt + \sum_j g^{ij} (\bm{y}_t) dW^j _t \; .
\end{equation}  

Q.E.D.

\section{Verification of Conditions}

In this section we verify that the assumptions of Lemma~1 and the Conditions 1 and 2 in its statement are satisfied.  In order to do this, we will need the following lemmas.

\begin{lemma}
Let the functions $f^i$ and $g^{ij}$ satisfy the assumptions of Theorem~1.  Then there exist $\epsilon _0 > 0$ and $C > 0$ such that for $0 \leq \epsilon \leq \epsilon _0$, $\bm{\gamma}(\bm{X}) - \epsilon \bm{\kappa} (\bm{X})$ is invertible and $ \| ( \bm{\gamma}(\bm{X}) - \epsilon \bm{\kappa} (\bm{X}))^{-1} \| < C$ for all $\bm{X} \in \mathbb{R}^{m + 2n}$.  
\end{lemma}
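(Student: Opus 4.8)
\emph{Proof proposal.}
The plan is to factor
$\bm{\gamma}(\bm{X}) - \epsilon\bm{\kappa}(\bm{X}) = \bm{\gamma}(\bm{X})\bigl(\bm{I} - \epsilon\,\bm{\gamma}(\bm{X})^{-1}\bm{\kappa}(\bm{X})\bigr)$ and to show that for $\epsilon$ small enough the second factor is invertible for \emph{every} $\bm{X}$ because $\epsilon\,\bm{\gamma}(\bm{X})^{-1}\bm{\kappa}(\bm{X})$ is a contraction uniformly in $\bm{X}$; since $\bm{\gamma}(\bm{X})$ is invertible for every $\bm{X}$, the product is then invertible as well. Thus everything reduces to two bounds that are uniform in $\bm{X}$: $M := \sup_{\bm{X}}\|\bm{\gamma}(\bm{X})^{-1}\| < \infty$ and $K := \sup_{\bm{X}}\|\bm{\kappa}(\bm{X})\| < \infty$.

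For the bound on $\|\bm{\gamma}(\bm{X})^{-1}\|$ I would invoke the explicit formula~\eqref{gamma inverse}: the blocks of $\bm{\gamma}(\bm{X})^{-1}$ are constant matrices built from $\bm{D}^1$, $\bm{D}^2$, $\Gamma$, $\Omega$ together with the single $\bm{X}$-dependent block $\tilde{\bm{g}}(\bm{y})$, whose entries are $k_j\,g^{ij}(\bm{y})$. Since the $g^{ij}$ are bounded by the hypotheses of Theorem~1, $\|\tilde{\bm{g}}(\bm{y})\|$ is bounded uniformly in $\bm{y}$, and hence $\|\bm{\gamma}(\bm{X})^{-1}\|$ is bounded uniformly in $\bm{X}$. (Alternatively one can avoid~\eqref{gamma inverse} and argue directly: by the block structure~\eqref{eq:10} the first block-column of $\bm{\gamma}$ is $(\bm{D}^1,\bm{0},\bm{0})^{\rm T}$, so $\det\bm{\gamma}(\bm{X})$ equals $\det\bm{D}^1$ times the determinant of the bottom-right $2n\times 2n$ block, which is a nonzero constant; the entries of $\bm{\gamma}(\bm{X})$, hence of its adjugate, are uniformly bounded because the $g^{ij}$ are bounded, and therefore $\|\bm{\gamma}(\bm{X})^{-1}\| = \|\mathrm{adj}\,\bm{\gamma}(\bm{X})\|/|\det\bm{\gamma}(\bm{X})|$ is uniformly bounded.) For the bound on $\|\bm{\kappa}(\bm{X})\|$, note that $\bm{\kappa}(\bm{X})$ has a single nonzero block, $\hat{\bm{J}}_f(\bm{y})$, with entries $\tfrac{c_k}{c_i}\,\partial f^i(\bm{y})/\partial y_k$; since the $f^i$ have bounded first derivatives, these entries are bounded uniformly in $\bm{y}$, so $K < \infty$.

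It then suffices to pick $\epsilon_0 \in \bigl(0,(MK)^{-1}\bigr)$ — for instance $\epsilon_0 = \tfrac12(1+MK)^{-1}$, which also covers the trivial case $K=0$. For $0\le\epsilon\le\epsilon_0$ and any $\bm{X}$ we then have $\|\epsilon\,\bm{\gamma}(\bm{X})^{-1}\bm{\kappa}(\bm{X})\| \le \epsilon MK < 1$, so $\bm{I} - \epsilon\,\bm{\gamma}(\bm{X})^{-1}\bm{\kappa}(\bm{X})$ is invertible (its inverse is the convergent Neumann series $\sum_{n\ge 0}\epsilon^n(\bm{\gamma}(\bm{X})^{-1}\bm{\kappa}(\bm{X}))^n$), and consequently $\bm{\gamma}(\bm{X}) - \epsilon\bm{\kappa}(\bm{X}) = \bm{\gamma}(\bm{X})\bigl(\bm{I} - \epsilon\,\bm{\gamma}(\bm{X})^{-1}\bm{\kappa}(\bm{X})\bigr)$ is invertible for all $\bm{X}\in\mathbb{R}^{m+2n}$.

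The argument is elementary, and there is no deep obstacle; the one point that genuinely needs care is the uniform-in-$\bm{X}$ control of $\|\bm{\gamma}(\bm{X})^{-1}\|$, since $\mathbb{R}^{m+2n}$ is noncompact. This is exactly where the boundedness assumption on the $g^{ij}$ enters — without it the off-diagonal block $\tilde{\bm{g}}(\bm{y})$ could grow without bound and no single $\epsilon_0$ would work for all $\bm{X}$ — so it is worth verifying~\eqref{gamma inverse} (or the determinant/adjugate estimate) carefully to confirm that $M$ and $K$ are truly independent of $\bm{X}$.
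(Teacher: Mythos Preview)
Your proof is correct, but it takes a different route from the paper's. The paper argues topologically: since $\bm{\gamma}$ and $\bm{\kappa}$ are bounded, the closure of $\{\bm{\gamma}(\bm{X}) - \epsilon\bm{\kappa}(\bm{X}) : \bm{X}\in\mathbb{R}^{m+2n},\ 0\le\epsilon\le\tilde\epsilon\}$ is compact; combining this with the continuity of the map sending a matrix to its eigenvalues, and with the fact (from~\eqref{eigenvaluesgamma}) that the eigenvalues of $\bm{\gamma}(\bm{X})$ are independent of $\bm{X}$ and nonzero, one concludes that for $\epsilon$ small enough no matrix in the family can have a zero eigenvalue. Your argument instead factors $\bm{\gamma}(\bm{X}) - \epsilon\bm{\kappa}(\bm{X}) = \bm{\gamma}(\bm{X})\bigl(\bm{I} - \epsilon\,\bm{\gamma}(\bm{X})^{-1}\bm{\kappa}(\bm{X})\bigr)$ and controls the second factor by a Neumann series once you have the uniform bounds $M=\sup_{\bm X}\|\bm{\gamma}(\bm X)^{-1}\|<\infty$ and $K=\sup_{\bm X}\|\bm{\kappa}(\bm X)\|<\infty$. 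Your approach is more elementary and constructive: it produces an explicit $\epsilon_0$ in terms of $M$ and $K$, and the Neumann series also yields a uniform bound on $\|(\bm{\gamma}(\bm X)-\epsilon\bm{\kappa}(\bm X))^{-1}\|$ for free --- exactly the estimate the paper later extracts separately in the proof of Lemma~3. The paper's argument, by contrast, avoids computing or invoking~\eqref{gamma inverse} and works entirely at the level of spectra, which is cleaner if one has not yet written down $\bm{\gamma}^{-1}$ explicitly.
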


\begin{proof}
Recall from (\ref{eigenvaluesgamma}) that the eigenvalues of $\bm{\gamma} (\bm{X})$ do not depend on $\bm{X}$ and are nonzero.  With this in mind, invertibility follows from the boundedness of $\bm{\kappa}$, the continuity of the function that maps a matrix to the vector of its eigenvalues (repeated according to their multiplicities), and the fact that, for fixed $\tilde{\epsilon} > 0$, the closure of the set $A_{\tilde{\epsilon}} = \{ \bm{\gamma} (\bm{X}) - \epsilon \bm{\kappa} (\bm{X})  : \bm{X} \in \mathbb{R}^{m + 2n} , \; 0 \leq \epsilon \leq \tilde{\epsilon} \}$ is compact since $\bm{\gamma}$ and $\bm{\kappa}$ are bounded.  The boundedness of the inverse follows from the compactness of the closure of $A_{\epsilon _0}$ and the fact that the map that takes a matrix to its inverse is a continuous function on the space of invertible matrices.
\end{proof}

\begin{lemma}
Let the functions $f^i$ and $g^{ij}$ satisfy the assumptions of Theorem~1 and let $\epsilon _0$ be as in Lemma 2.  Then there exists $C > 0$ such that for $0 \leq \epsilon \leq \epsilon _0$, $\bm{X} \in \mathbb{R}^{m + 2n}$, and $1 \leq \ell \leq m + 2n$,  
$$\left\| \frac{\partial}{\partial X_{\ell}} \big[ ( \bm{\gamma}(\bm{X}) - \epsilon \bm{\kappa} (\bm{X} ))^{-1} \big] \right\| < C \; . $$
\end{lemma}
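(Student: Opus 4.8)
The plan is to set $\bm{M}_\epsilon(\bm{X}) = \bm{\gamma}(\bm{X}) - \epsilon\bm{\kappa}(\bm{X})$ and exploit the differentiation formula $\partial_{X_l}(\bm{M}_\epsilon^{-1}) = -\bm{M}_\epsilon^{-1}\,(\partial_{X_l}\bm{M}_\epsilon)\,\bm{M}_\epsilon^{-1}$. This is legitimate because the entries of $\bm{\gamma}$ and $\bm{\kappa}$ are affine combinations of the functions $g^{ij}$ and $\partial f^i/\partial y_k$, which are differentiable under the hypotheses of Theorem~1, so $\bm{M}_\epsilon$ is differentiable in $\bm{X}$; and, by Lemma~2, $\bm{M}_\epsilon(\bm{X})$ is invertible for every $\bm{X}$ whenever $0 \leq \epsilon \leq \epsilon_0$, so $\bm{M}_\epsilon^{-1}$ is differentiable as well. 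Submultiplicativity of the induced norm then gives $\|\partial_{X_l}(\bm{M}_\epsilon^{-1})\| \leq \|\bm{M}_\epsilon^{-1}\|^2\,\|\partial_{X_l}\bm{M}_\epsilon\|$, so it suffices to bound $\|\partial_{X_l}\bm{M}_\epsilon(\bm{X})\|$ and $\|\bm{M}_\epsilon(\bm{X})^{-1}\|$ uniformly in $\bm{X} \in \mathbb{R}^{m+2n}$, in $l$, and in $\epsilon \in [0,\epsilon_0]$.

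The first bound is immediate from the block forms of $\bm{\gamma}$ and $\bm{\kappa}$: their only $\bm{X}$-dependent entries are $g^{ij}(\bm{y})/c_i$ in $\hat{\bm{g}}$ and $(c_k/c_i)\,\partial f^i(\bm{y})/\partial y_k$ in $\hat{\bm{J}}_f$, and these depend on $\bm{X}$ only through its first $m$ coordinates $\bm{y}$. Hence $\partial_{X_l}\bm{M}_\epsilon \equiv \bm{0}$ for $l > m$, while for $l \leq m$ the nonzero entries of $\partial_{X_l}\bm{M}_\epsilon$ are among $(1/c_i)\,\partial g^{ij}/\partial y_l$ and $\epsilon\,(c_k/c_i)\,\partial^2 f^i/\partial y_l\partial y_k$, all of which are bounded uniformly by the assumptions of Theorem~1 (bounded first derivatives of the $g^{ij}$, bounded second derivatives of the $f^i$). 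For the second bound I would re-use the mechanism behind Lemma~2: by \eqref{eigenvaluesgamma} the eigenvalues of $\bm{\gamma}(\bm{X})$ form a fixed set, independent of $\bm{X}$, whose elements all have modulus at least some $\mu > 0$; since $\|\bm{M}_\epsilon(\bm{X}) - \bm{\gamma}(\bm{X})\| = \epsilon\|\bm{\kappa}(\bm{X})\| \leq \epsilon\sup_{\bm{X}}\|\bm{\kappa}(\bm{X})\|$ and the eigenvalue map is uniformly continuous on the compact closure of $\{\bm{\gamma}(\bm{X}) - \epsilon\bm{\kappa}(\bm{X}) : \bm{X} \in \mathbb{R}^{m+2n},\ 0 \leq \epsilon \leq \epsilon_0\}$, shrinking $\epsilon_0$ if necessary we may ensure that every eigenvalue of $\bm{M}_\epsilon(\bm{X})$ has modulus at least $\mu/2$, hence $|\det\bm{M}_\epsilon(\bm{X})| \geq (\mu/2)^{m+2n}$ uniformly. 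Combining this with the uniform boundedness of the entries of $\bm{\gamma}$ and $\bm{\kappa}$ (so that the entries of the adjugate of $\bm{M}_\epsilon$, being polynomials of fixed degree in the entries of $\bm{M}_\epsilon$, are uniformly bounded as well), Cramer's rule yields $\|\bm{M}_\epsilon(\bm{X})^{-1}\| \leq C'$ uniformly. Substituting the two bounds into the inequality above finishes the proof. Alternatively, one can obtain the determinant lower bound directly from the structure of $\bm{M}_\epsilon$: its lower-left $2n \times m$ block vanishes, so $\det\bm{M}_\epsilon(\bm{X})$ equals a nonzero constant independent of $\bm{X}$ and $\epsilon$ times $\det(\bm{D}^1 - \epsilon\hat{\bm{J}}_f(\bm{y}))$, and the latter is a uniformly small perturbation of the fixed invertible matrix $\bm{D}^1$.

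The main obstacle is precisely the uniform bound on $\|\bm{M}_\epsilon(\bm{X})^{-1}\|$: Lemma~2 only gives invertibility at each point, and one must rule out the possibility that $\bm{M}_\epsilon(\bm{X})$ becomes arbitrarily ill-conditioned as $\|\bm{X}\| \to \infty$. What makes this work is that the spectrum of $\bm{\gamma}(\bm{X})$ does not depend on $\bm{X}$, so uniformly in $\bm{X}$ the matrix $\bm{M}_\epsilon(\bm{X})$ is a small perturbation of a matrix whose eigenvalues are a fixed set bounded away from zero; everything else is routine bookkeeping with the boundedness assumptions of Theorem~1.
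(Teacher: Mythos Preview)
Your proof is correct and follows essentially the same approach as the paper: both use the identity $\partial_{X_l}(\bm{M}_\epsilon^{-1}) = -\bm{M}_\epsilon^{-1}(\partial_{X_l}\bm{M}_\epsilon)\bm{M}_\epsilon^{-1}$, bound $\|\partial_{X_l}\bm{M}_\epsilon\|$ via the boundedness assumptions on the derivatives of $g^{ij}$ and second derivatives of $f^i$, and obtain a uniform bound on $\|\bm{M}_\epsilon^{-1}\|$ from the compactness of the closure of $\{\bm{\gamma}(\bm{X}) - \epsilon\bm{\kappa}(\bm{X})\}$. The only difference is cosmetic: the paper bounds $\|\bm{M}_\epsilon^{-1}\|$ directly by invoking continuity of the matrix-inverse map on this compact set of invertible matrices, whereas you route through an eigenvalue lower bound and Cramer's rule; your phrase ``shrinking $\epsilon_0$ if necessary'' is unnecessary, since Lemma~2 already guarantees invertibility on the full closure for the given $\epsilon_0$, and continuity of eigenvalues on that compact set immediately gives a uniform spectral gap.
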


\begin{proof}
Differentiating the identity $( \bm{\gamma}(\bm{X}) - \epsilon \bm{\kappa} (\bm{X}))^{-1}( \bm{\gamma}(\bm{X}) - \epsilon \bm{\kappa} (\bm{X})) = \bm{I}$, we obtain
\begin{align} \label{derivative of inverse}
 \frac{\partial}{\partial X_{\ell}} \big[ ( &\bm{\gamma}(\bm{X}) - \epsilon \bm{\kappa} (\bm{X}))^{-1} \big] =\\
 &- ( \bm{\gamma}(\bm{X}) - \epsilon \bm{\kappa} (\bm{X}))^{-1} \left[ \frac{\partial}{\partial X_{\ell}} \big[ \bm{\gamma}(\bm{X}) - \epsilon \bm{\kappa} (\bm{X}) \big] \right] ( \bm{\gamma}(\bm{X}) - \epsilon \bm{\kappa} (\bm{X}))^{-1} \; . \notag
\end{align}
From the assumption that the derivatives of the $g^{ij}$ and the second derivatives of the $f^i$ are bounded, it follows that $\frac{\partial \bm{\gamma}}{\partial X_{\ell}}$ and $ \frac{\partial \bm{\kappa}}{\partial X_{\ell}}$ are bounded functions of $\bm{X}$.  The statement then follows from this observation, Lemma 2, and equation (\ref{derivative of inverse}).
\end{proof}

We introduce some notation that will be used in the following.  Let $\bm{\Phi} _{t_0} (t)$ be the fundamental solution matrix of the constant coefficient system
\begin{equation}
\label{Phiequation}
\frac{d}{dt} \bm{\Phi} (t)  = - \frac{1}{\epsilon} \bm{M} \bm{\Phi} (t)
\end{equation}
satisfying $\bm{\Phi} _{t_0} (t_0) = \bm{I}$,
where
\begin{equation}
\label{Aequation}
\bm{M} = \left[
\setlength{\extrarowheight}{5pt}
\begin{array}{cc}
\bm{0} & - \frac{\Gamma}{\Omega ^2} \bm{D} ^2 \\
\Gamma \bm{D} ^2 &  \frac{\Gamma ^2}{\Omega ^2} \bm{D}^2
\end{array}
\setlength{\extrarowheight}{5pt}
\right].
\end{equation}
Similarly, let $\bm{\psi} _{t_0} (t)$ be the fundamental solution matrix of the variable coefficient system
\begin{equation}
\label{psiequation}
\frac{d}{dt} \bm{\psi} (t) =  \left( - \frac{1}{\epsilon} \bm{D^1} + \hat{\bm{J}}_f (\bm{y}^{\epsilon} _t) \right) \bm{\psi} (t)
\end{equation}
satisfying $\bm{\psi}_{t_0} (t_0) = {\bm I}$.

\begin{lemma}
For each $\epsilon > 0$, let $\bm{y}^{\epsilon}$ be any process with paths in $C([0,T], \mathbb{R}^{m})$
and let $\bm{\Phi} _{t_0} (t)$ and  $\bm{\psi} _{t_0} (t)$ be defined as above.  Let the functions $f^i$ satisfy the assumptions of Theorem~1.  Then there exist $C$, $C_d > 0$ independent of $\epsilon$ such that for $0 \leq t_0 \leq t \leq T$,
\begin{equation}
\label{lemma3statement}
\| \bm{\Phi} _{t_0} (t)  \| \leq  C \exp \left\lbrace - \frac{ C_d (t - t_0)}{\epsilon} \right\rbrace 
\end{equation}
and
\begin{equation}
\label{lemma3statement}
\| \bm{\psi} _{t_0} (t)  \| \leq  C \exp \left\lbrace - \frac{ C_d (t - t_0)}{\epsilon} \right\rbrace  .
\end{equation}

\end{lemma}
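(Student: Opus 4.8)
The plan is to reduce both bounds to a single idea: a flow generated by $-\tfrac1\epsilon\bm{M}$, where $\bm{M}$ is a matrix whose spectrum (or the spectrum of its constant part) lies in the open right half-plane, contracts at rate $\sim e^{-C_d(t-t_0)/\epsilon}$, and a uniformly bounded, $\epsilon$-independent perturbation of the generator does not destroy this. The constants produced this way are manifestly independent of $\epsilon$, because $\bm{M}$ itself does not involve $\epsilon$ and we only rescale time by $1/\epsilon$.

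First, for $\bm{\Phi}_{t_0}(t)$ the coefficient matrix in \eqref{Phiequation} is the constant matrix $\bm{A}$ of \eqref{Aequation}, so $\bm{\Phi}_{t_0}(t)=e^{-\bm{A}(t-t_0)/\epsilon}$. Since $\bm{D}^2$ is diagonal, $\bm{A}$ is similar to a block-diagonal matrix with $n$ blocks of size $2\times2$, and its eigenvalues are exactly the $2n$ numbers $\tfrac{\Gamma^2}{2k_j\Omega^2}\bigl[1\pm\sqrt{1-4\Omega^2/\Gamma^2}\,\bigr]$, $j=1,\dots,n$, listed in \eqref{eigenvaluesgamma}; as already observed there, each has strictly positive real part. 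Choose any $C_d$ with $0<C_d<\min\{\mathrm{Re}\,\lambda:\lambda\in\mathrm{spec}(\bm{A})\}$. Then every eigenvalue of $-\bm{A}+C_d\bm{I}$ has negative real part, so $C:=\sup_{s\ge0}\|e^{(-\bm{A}+C_d\bm{I})s}\|<\infty$ (finite because this matrix exponential tends to $\bm{0}$, as one sees from the Jordan form of $\bm{A}$), and $C$ depends only on $\bm{A}$ and $C_d$, not on $\epsilon$. Writing $e^{-\bm{A}s}=e^{(-\bm{A}+C_d\bm{I})s}e^{-C_d s}$ and setting $s=(t-t_0)/\epsilon\ge0$ gives $\|\bm{\Phi}_{t_0}(t)\|\le C e^{-C_d(t-t_0)/\epsilon}$, which is the first bound in the lemma.

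Next, for $\bm{\psi}_{t_0}(t)$ the generator in \eqref{psiequation} is $-\tfrac1\epsilon\bm{D}^1+\hat{\bm{J}}_f(\bm{y}^{\epsilon}_t)$. Here $\bm{D}^1$ is diagonal with positive entries $1/c_i$, so $\|e^{-\bm{D}^1 s}\|=e^{-s/c^{*}}$ with $c^{*}:=\max_i c_i$, and $\hat{\bm{J}}_f$ is bounded, say $\|\hat{\bm{J}}_f(\bm{y})\|\le K$ for all $\bm{y}$ with $K$ independent of $\epsilon$, because the first derivatives of the $f^i$ are bounded by hypothesis. Treating $\hat{\bm{J}}_f$ as an inhomogeneous term, the variation-of-parameters identity for \eqref{psiequation} reads
\[
\bm{\psi}_{t_0}(t)=e^{-\bm{D}^1(t-t_0)/\epsilon}+\int_{t_0}^{t} e^{-\bm{D}^1(t-s)/\epsilon}\,\hat{\bm{J}}_f(\bm{y}^{\epsilon}_s)\,\bm{\psi}_{t_0}(s)\,ds .
\]
Taking norms, using $\|e^{-\bm{D}^1(t-s)/\epsilon}\|=e^{-(t-s)/(\epsilon c^{*})}$ and $\|\hat{\bm{J}}_f\|\le K$, multiplying through by $e^{(t-t_0)/(\epsilon c^{*})}$, and setting $u(t):=e^{(t-t_0)/(\epsilon c^{*})}\|\bm{\psi}_{t_0}(t)\|$, we obtain $u(t)\le1+K\int_{t_0}^{t}u(s)\,ds$. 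Grönwall's inequality then gives $u(t)\le e^{K(t-t_0)}\le e^{KT}$, i.e.\ $\|\bm{\psi}_{t_0}(t)\|\le e^{KT}e^{-(t-t_0)/(\epsilon c^{*})}$; this is valid for every $\epsilon>0$ since the factor $e^{-(t-s)/(\epsilon c^{*})}\le1$ is discarded before Grönwall is applied, so the constant $e^{KT}$ contains no $\epsilon$. Finally, enlarging $C$ to the larger of the two constants obtained and shrinking $C_d$ to $\min\{C_d,\,1/c^{*}\}$ yields one pair $(C,C_d)$, independent of $\epsilon$, valid in both displayed inequalities of the lemma.

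I do not anticipate a genuine obstacle: the whole content is the observation that a strongly contracting linear flow with rate of order $1/\epsilon$ absorbs a bounded, $\epsilon$-free perturbation of its generator. The only point deserving attention is precisely this uniformity in $\epsilon$, which is why I keep the perturbation bound $K$ and the horizon $T$ visible at every step rather than folding them into unnamed constants. If a sharper exponent were wanted one would retain the factor $e^{-(t-s)/(\epsilon c^{*})}$ inside the Duhamel integral instead of bounding it by $1$, but that refinement is unnecessary for the statement as given.
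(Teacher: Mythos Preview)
Your proof is correct and, for the bound on $\bm{\Phi}_{t_0}(t)$, is essentially the paper's argument in slightly more abstract dress: the paper diagonalizes $\bm{A}$ explicitly and reads off the decay rate $c_\lambda=\min_j\mathrm{Re}\,\lambda_j$, whereas you package the same fact as $\sup_{s\ge0}\|e^{(-\bm{A}+C_d\bm{I})s}\|<\infty$. For $\bm{\psi}_{t_0}(t)$ the two routes diverge a little: the paper uses an energy estimate, differentiating $\|\bm{u}(t)\|^2$ for a column $\bm{u}$ of $\bm{\psi}_{t_0}$ to get $\tfrac{d}{dt}\|\bm{u}\|^2\le 2(-\tfrac{1}{c\epsilon}+C_4)\|\bm{u}\|^2$ and then applying Gr\"onwall, while you use Duhamel/variation of parameters against the unperturbed semigroup $e^{-\bm{D}^1 s/\epsilon}$ and apply Gr\"onwall to the integrated inequality. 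Both are standard and yield the same constants; your Duhamel route has the mild advantage of not requiring the solution to be differentiable in $t$ beyond what the equation already gives, while the paper's energy method is marginally quicker on the page. One small expository point: your parenthetical about the factor $e^{-(t-s)/(\epsilon c^{*})}\le1$ being ``discarded'' is misleading---in fact you do not discard it, you absorb it into $u(s)$ after multiplying through by $e^{(t-t_0)/(\epsilon c^{*})}$, which is exactly why no $\epsilon$ survives in the Gr\"onwall constant.
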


\begin{proof}

Let $\bm{\beta}$ be a vector-valued function which solves 

\begin{equation}
\label{betaequation}
\frac{d}{dt} \bm{\beta} (t)
 = - \frac{1}{\epsilon} \bm{M} \bm{\beta} (t)
\end{equation}
where $\bm{M}$ is defined in (\ref{Aequation}).
The eigenvalues $\lambda_1, \lambda_2, ..., \lambda_{2n}$ of $\bm{M}$ are equal to
\begin{equation*}
\frac{\Gamma ^2}{2k_j \Omega ^2} \left[1 \pm \sqrt{1 - 4 \frac{\Omega ^2}{\Gamma ^2}} \right], \hspace{5pt}  j = 1, ..., n
\end{equation*}
and $\bm{M}$ is diagonalizable if $\Gamma ^2 \neq 4 \Omega ^2$ (if $\Gamma ^2 = 4 \Omega ^2$, an argument similar to the one below follows using the Jordan form of $\bm{M}$).  Writing $\bm{M} = \bm{P} \bm{\Lambda} \bm{P}^{-1}$, where $\bm{\Lambda}$ is the diagonal matrix consisting of $\lambda_1, \lambda_2, ..., \lambda_{2n}$, gives
\begin{equation*}
\bm{\beta} (t) = \bm{P} \left[\begin{array}{cccc}
e^{\frac{-(t - t_0) \lambda _1}{\epsilon}} & 0 & ... & 0  \\
0 & e^{\frac{-(t - t_0) \lambda _2}{\epsilon}} & ... & 0\\
\vdots & \vdots & \ddots & \vdots \\
0 & 0 & ... & e^{\frac{-(t - t_0) \lambda _{2n}}{\epsilon}}
\end{array}\right] \bm{P} ^{-1} \bm{\beta} (t_0) \; .
\end{equation*}
Let $c_{\lambda} = \min_{1 \leq j \leq 2n} Re (\lambda _j) > 0$.  Then we have
\begin{equation}
\label{munubound}
\| \bm{\beta} (t) \| \leq C_1 \| \bm{\beta} (t_0) \| e^{\frac{- c_{\lambda} (t - t_0)}{\epsilon}} 
\end{equation}
where $C_1$ is a constant.
Now, let $(\bm{\Phi}_{t_0} (t))_{\cdot j}$ denote the $j^{th}$ column of $\bm{\Phi}_{t_0} (t)$.  Then, since $(\bm{\Phi}_{t_0} (t))_{\cdot j}$ solves (\ref{betaequation}), by (\ref{munubound}) and by the chain of inequalities
\begin{equation}
\label{chain}
\| \bm{\Phi} _{t_0} (t) \| \leq C_2 \| \bm{\Phi} _{t_0} (t)  \| _1  = C_2 \max _j  \| (\bm{\Phi} _{t_0} (t))_{\cdot j}  \| _1 \leq C_3 \max _j  \| (\bm{\Phi} _{t_0} (t))_{\cdot j}  \| \; ,
\end{equation}
where  $\| \cdot \|_1$ denotes the vector $l^1$ norm  or the induced matrix $l^1$ norm depending on its argument, and $C_2$ and $C_3$ are constants, we have

$$\| \bm{\Phi}_{t_0} (t) \| \leq C e^{\frac{- c_{\lambda} (t - t_0)}{\epsilon}}$$
for $0 \leq t_0 \leq t \leq T$.

Next, let $\bm{u}$ be a process with paths in $C([0,T], \mathbb{R}^{m})$ that solves the equation 
$$\frac{d}{dt} \bm{u} (t) =  \left(- \frac{1}{\epsilon} \bm{D^1} + \hat{\bm{J}}_f (\bm{y}^{\epsilon} _t) \right) \bm{u} (t) \; .
$$
Then
\begin{align*}
\frac{d}{dt} \Big( \| \bm{u}(t) \| ^2 \Big) &= \frac{d}{dt} \Big( \bm{u}(t) ^T \bm{u}(t) \Big) \\
&= 2 \Bigg( \Big( - \frac{1}{\epsilon} \bm{D^1} + \hat{\bm{J}}_f (\bm{y}^{\epsilon} _t) \Big) \bm{u} (t) \Bigg)^T \bm{u}(t) \\
& \leq \frac{- 2}{ c \epsilon} \| \bm{u}(t) \| ^2 + 2 \| \hat{\bm{J}}_f (\bm{y}^{\epsilon} _t)  \bm{u}(t) \| \| \bm{u}(t) \|\\
& \leq 2 \left( \frac{- 1}{ c \epsilon}  + C_4 \right)  \| \bm{u}(t) \| ^2
\end{align*}
where $c = \max_{1 \leq i \leq m} \; c_i > 0$ (recall that $\bm{D}^1$ is the diagonal matrix with entries $\frac{1}{c_i}$) and $C_4$ is a constant that bounds $\| \hat{\bm{J}}_f (\bm{y}^{\epsilon} _t) \|$ (such a bound exists by the assumption that the first derivatives of the $f^i$ are bounded).  Thus, by Gronwall's inequality, we have
$$\| \bm{u} (t) \| ^2 \leq \| \bm{u}(t_0) \| ^2 e^{2( \frac{-1}{ c \epsilon}  + C_4) (t - t_0)} ,$$
so that
$$\| \bm{u} (t) \| \leq C_5 \| \bm{u}(t_0) \|  e^{\frac{- (t - t_0)}{ c \epsilon}} $$
for $0 \leq t_0 \leq t \leq T$, where $C_5$ depends on $T$.  
Then, by the analogue of (\ref{chain}) for $\bm{\psi}_{t_0} (t)$, we have

$$\| \bm{\psi}_{t_0} (t) \| \leq C e^{\frac{- (t - t_0)}{ c \epsilon}} .$$

\end{proof}

\begin{lemma}
Let $\bm{K} \in \mathbb{R} ^{2n \times n}$ be a constant, nonrandom matrix.  Then there exists $C>0$ independent of $\epsilon$ such that 
\begin{equation*}
 E \left[ \left( \sup_{0 \leq t \leq T} \left\| \int _0 ^t \bm{\Phi}_s (t)  \bm{K} d \bm{W} _s \right\| \right) ^2 \right] \leq C \epsilon ^{1 / 2} \; .
\end{equation*}
\end{lemma}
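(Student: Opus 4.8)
The plan is to make the integrand explicit and integrate by parts, turning the stochastic integral into an ordinary one that can be bounded pathwise using the exponential decay from Lemma~3 and the H\"older continuity of Brownian motion. Since $\bm{A}$ is constant, $\bm{\Phi}_s(t)=e^{-\bm{A}(t-s)/\epsilon}$, so for fixed $t$ the matrix-valued function $s\mapsto \bm{\Phi}_s(t)\bm{K}$ is deterministic and $C^1$, with $\frac{\partial}{\partial s}\bm{\Phi}_s(t)=\frac{1}{\epsilon}\bm{A}\bm{\Phi}_s(t)$. Writing $\bm{N}^{\epsilon}_t=\int_0^t\bm{\Phi}_s(t)\bm{K}\,d\bm{W}_s$ and integrating by parts (which produces no It\^o correction, the integrand being deterministic and of bounded variation in $s$), with $\bm{\Phi}_t(t)=\bm{I}$ and $\bm{W}_0=\bm{0}$, I obtain
\[
\bm{N}^{\epsilon}_t=\bm{K}\bm{W}_t-\frac{1}{\epsilon}\int_0^t\bm{A}\,\bm{\Phi}_s(t)\,\bm{K}\,\bm{W}_s\,ds .
\]
The key step is then the exact identity $\frac{1}{\epsilon}\int_0^t\bm{A}\,\bm{\Phi}_s(t)\,ds=\bm{I}-\bm{\Phi}_0(t)$ (immediate from the formula for $\frac{\partial}{\partial s}\bm{\Phi}_s(t)$), which lets me subtract a copy of $\bm{K}\bm{W}_t$ and rewrite
\[
\bm{N}^{\epsilon}_t=\bm{\Phi}_0(t)\,\bm{K}\,\bm{W}_t+\frac{1}{\epsilon}\int_0^t\bm{A}\,\bm{\Phi}_s(t)\,\bm{K}\,(\bm{W}_t-\bm{W}_s)\,ds .
\]
After this rearrangement both terms are small: the first carries the exponentially decaying prefactor $\bm{\Phi}_0(t)=e^{-\bm{A}t/\epsilon}$, and in the second the increment $\bm{W}_t-\bm{W}_s$ is small precisely in the regime $t-s\lesssim\epsilon$ where the kernel $\bm{\Phi}_s(t)$ is not negligible.

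To make this quantitative I would fix $\gamma\in(0,1/2)$ and use the random H\"older constant $G_\gamma:=\sup_{0\le u<v\le T}\|\bm{W}_v-\bm{W}_u\|\,(v-u)^{-\gamma}$, which is a.s.\ finite with $E[G_\gamma^p]<\infty$ for every $p$, and, crucially, does not depend on $\epsilon$ (only on $T$, $\gamma$, and the dimension). Lemma~3 gives $\|\bm{\Phi}_s(t)\|\le C e^{-C_d(t-s)/\epsilon}$. For the first term, $\|\bm{\Phi}_0(t)\bm{K}\bm{W}_t\|\le C\|\bm{K}\|\,G_\gamma\,e^{-C_d t/\epsilon}t^\gamma$, and an elementary optimization gives $\sup_{t\ge0}e^{-C_d t/\epsilon}t^\gamma=\big(\gamma/(eC_d)\big)^\gamma\epsilon^\gamma$. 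For the second term, $\|\bm{W}_t-\bm{W}_s\|\le G_\gamma(t-s)^\gamma$, so substituting $u=t-s$ and then $u=\epsilon r$,
\[
\Big\|\frac{1}{\epsilon}\int_0^t\bm{A}\,\bm{\Phi}_s(t)\,\bm{K}\,(\bm{W}_t-\bm{W}_s)\,ds\Big\|\le\frac{C\,\|\bm{A}\|\,\|\bm{K}\|\,G_\gamma}{\epsilon}\int_0^\infty e^{-C_d u/\epsilon}u^\gamma\,du=C\,\|\bm{A}\|\,\|\bm{K}\|\,\Gamma(\gamma+1)\,C_d^{-(\gamma+1)}\,G_\gamma\,\epsilon^\gamma .
\]
Both estimates hold uniformly in $t\in[0,T]$, so $\sup_{0\le t\le T}\|\bm{N}^{\epsilon}_t\|\le C_0\,G_\gamma\,\epsilon^\gamma$ with $C_0$ independent of $\epsilon$. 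Squaring and taking expectations, $E\big[(\sup_{0\le t\le T}\|\bm{N}^{\epsilon}_t\|)^2\big]\le C_0^2\,E[G_\gamma^2]\,\epsilon^{2\gamma}$, and choosing $\gamma=1/4$ yields the claim with $C=C_0^2\,E[G_{1/4}^2]<\infty$.

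I expect the cancellation step to be the main obstacle: a crude integration-by-parts estimate, or a direct It\^o-formula and Doob-maximal-inequality argument applied to $\bm{N}^{\epsilon}_t$ (which solves $d\bm{N}^{\epsilon}_t=-\frac{1}{\epsilon}\bm{A}\bm{N}^{\epsilon}_t\,dt+\bm{K}\,d\bm{W}_t$), leaves an $O(1)$ remainder that martingale estimates cannot remove, because it fails to capture the cancellation between the forcing $\bm{K}\,d\bm{W}$ and the fast dissipation $-\frac{1}{\epsilon}\bm{A}\bm{N}^{\epsilon}$; the rearrangement above is designed so that every surviving term carries either the exponential prefactor $\bm{\Phi}_0$ or a short-time Brownian increment. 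The remaining input — that $E[G_\gamma^2]$ is finite and $\epsilon$-free — is routine, and it is the mild loss in that step (taking $\gamma=1/4$ rather than $\gamma\uparrow1/2$) that produces the exponent $1/2$ rather than a sharper rate.
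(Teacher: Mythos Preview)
Your proof is correct and takes a genuinely different route from the paper's. The paper uses the Da Prato--Kwapie\'n--Zabczyk factorization method: it writes $\bm{I}(t)=\frac{\sin(\pi\alpha)}{\pi}\int_0^t\bm{\Phi}_s(t)(t-s)^{\alpha-1}\bm{Y}(s)\,ds$ with $\bm{Y}(s)=\int_0^s\bm{\Phi}_u(s)(s-u)^{-\alpha}\bm{K}\,d\bm{W}_u$, applies H\"older's inequality in $s$, bounds the kernel integral via Lemma~4, and controls $E[\|\bm{Y}(t)\|^4]$ through the It\^o isometry and Gaussian moment identities; choosing $\alpha\in(1/4,1/2)$ and $m=2$ gives $E[\sup_t\|\bm{I}(t)\|^4]\le C\epsilon$, and Cauchy--Schwarz then yields the stated $\epsilon^{1/2}$. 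Your argument is more elementary and entirely pathwise: the integration-by-parts identity together with $\frac{1}{\epsilon}\int_0^t\bm{A}\bm{\Phi}_s(t)\,ds=\bm{I}-\bm{\Phi}_0(t)$ reduces everything to the H\"older constant $G_\gamma$ of Brownian motion, which is $\epsilon$-independent. Your approach avoids the factorization machinery and makes the mechanism (exponential kernel localizing to $t-s\lesssim\epsilon$, where $\|\bm{W}_t-\bm{W}_s\|\lesssim\epsilon^\gamma$) fully transparent; it also shows immediately that the bound can be tightened to $C_\gamma\epsilon^{2\gamma}$ for any $\gamma<1/2$. The paper's method, on the other hand, is the standard tool when the semigroup is only strongly continuous or the state space is infinite-dimensional, situations where your pathwise integration by parts would not be available.
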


\begin{proof}
We begin by following the first part of the argument of \cite[Lemma 3.7]{Pavliotis}.  We fix $\alpha \in (0, \frac{1}{2})$ and use the factorization method from \cite{Da Prato} (see also \cite[sec. 5.3]{Da Prato 2}) to rewrite
\begin{align*}
\bm{I}(t) &= \int _0 ^t \bm{\Phi}_s (t) \bm{K} d \bm{W} _s \\
 &= \frac{\sin(\pi \alpha)}{\pi} \int _0 ^t \bm{\Phi}_s (t) (t - s)^{\alpha - 1} \bm{Y}(s) ds \; ,
\end{align*}
where 
$$\bm{Y}(s) = \int_0^s \bm{\Phi}_u (s) (s - u)^{- \alpha} \bm{K} d \bm{W} _u \; .$$
This identity follows from the property of fundamental solution matrices $\bm{\Phi}_s (t) \bm{\Phi}_u (s) = \bm{\Phi}_u (t)$ \cite{Hartman} and the identity
$$\int _u ^t (t - s) ^{\alpha - 1} (s - u)^{- \alpha} ds = \frac{\pi}{\sin(\pi \alpha)} \; , \hspace{10pt} 0 < \alpha < 1 \; .$$

We fix $m > \frac{1}{2 \alpha}$ and use the H$\ddot{\mathrm{o}}$lder inequality:
\begin{equation*}
\| \bm{I}(t) \| ^{2m} \leq C_1 \left( \int _0 ^t \| \bm{\Phi}_s (t) (t - s)^{\alpha - 1} \| ^{\frac{2m}{2m - 1}} ds \right) ^{2m - 1} \int_0 ^t \| \bm{Y}(s) \| ^{2m} ds \; .
\end{equation*}
Using Lemma 4 and the change of variables $z = \frac{2m}{2m-1} \frac{C_d (t-s)}{\epsilon} $, we have
\begin{align*}
\int _0 ^t \| \bm{\Phi}_s (t) (t - s)^{\alpha - 1} \| ^{\frac{2m}{2m - 1}} ds
& \leq C_2 \int _0 ^t e^{- \frac{2m}{2m - 1} \frac{C_d (t - s)}{\epsilon}}  (t - s)^{\frac{2m}{2m - 1} ( \alpha - 1)}  ds \\
& = C_2 \left(\frac{2m - 1}{2 C_d m} \right) ^{\frac{2m \alpha - 1}{2m - 1}} \epsilon ^{\frac{2m \alpha - 1}{2m - 1}} \int_0 ^{\frac{C_d t}{\epsilon} \frac{2 m}{2m - 1}} e ^{- z} z ^{\frac{2m}{2m - 1} (\alpha - 1)} dz \\
& \leq C_3 \epsilon ^{\frac{2m \alpha - 1}{2m - 1}} \int_0 ^{\infty} e ^{- z} z ^{\frac{2m}{2m - 1} (\alpha - 1)} dz \\
& \leq C_4 \epsilon ^{\frac{2m \alpha - 1}{2m - 1}}
\end{align*}
where in the above we have used the fact that $e ^{- z} z ^{\frac{2m}{2m - 1} (\alpha - 1)} \in L^1 (\mathbb{R} ^+)$ since $m > \frac{1}{2 \alpha}$.  Therefore, we have
\begin{equation*}
E \left[ \sup_{0 \leq t \leq T} \| \bm{I}(t) \| ^{2m} \right] \leq C_5 \epsilon ^{2m \alpha - 1} E \left[ \int _0^T \| \bm{Y}(s) \| ^{2m} ds \right] .
\end{equation*}

In the following, for a matrix $\bm{A}$, let $\| \bm{A} \| _{HS} = \sqrt{\sum _{i,j} A_{ij} ^2}$ denote the Hilbert-Schmidt norm of $\bm{A}$.  Then, letting $\frac{1}{4} < \alpha < \frac{1}{2}$ and $m = 2$, we have
\begin{align*}
E \left[ \| \bm{Y}(t) \| ^{4} \right] & \leq 2n \sum _{i = 1} ^{2n} E\left[ \big( Y _i (t) \big) ^4  \right] \hspace{20pt} \textrm{by the Cauchy-Schwarz inequality} \\
& = 6n \sum _{i = 1} ^{2n} \left( E \left[ \big( Y _i (t) \big) ^2  \right] \right) ^2 \hspace{20pt} \textrm{since, for each $i$, $Y_i (t)$ is Gaussian}\\
& \leq 6n \bigg( E \Big[ \| \bm{Y}(t) \| ^2 \Big] \bigg) ^2 \\
& = 6n \left( \int_0^t \| \bm{\Phi}_u (t) (t - u)^{- \alpha} \bm{K} \| _{HS} ^2 \; du \right) ^2 \\
& \hspace{30pt} \textrm{by the It\^o isometry (see \cite[Theorem (4.4.14)]{Arnold})} \\
& \leq C_6 \left( \int_0^t \| \bm{\Phi}_u (t)  \| ^2 (t - u)^{- 2 \alpha} du \right) ^2 \\
& \leq C_7 \left( \int_0^t e^{- \frac{2 C_d (t -u)}{\epsilon}} (t - u)^{- 2 \alpha} du \right) ^2 \hspace{20pt} \textrm{by Lemma 4} \\
& = C_7 \left( \frac{\epsilon}{2 C_d} \right) ^{2(1 - 2 \alpha)} \left( \int_0 ^{\frac{2 C_d t}{\epsilon}} e^{-s} s^{-2 \alpha} ds \right) ^2 \\
& \leq C_8 \epsilon ^{2(1 - 2 \alpha)} \hspace{20pt} \textrm{using the fact that $e^{-s} s^{-2 \alpha} \in L^1 (\mathbb{R} ^+)$ since $\alpha < \frac{1}{2}$} \; .  
\end{align*}

Thus,
\begin{equation*}
E \left[ \sup_{0 \leq t \leq T} \| \bm{I}(t) \| ^{4} \right] \leq C_9 \epsilon \; , 
\end{equation*}
where $C_9$ is a constant that depends on $T$, so by the Cauchy-Schwarz inequality,
\begin{equation*}
E \left[ \sup_{0 \leq t \leq T} \| \bm{I}(t) \| ^{2} \right] \leq \left( E \left[ \sup_{0 \leq t \leq T} \| \bm{I}(t) \| ^{4} \right] \right) ^{1 / 2} \leq C \epsilon ^{1 / 2} \; . 
\end{equation*}

\end{proof}

\begin{lemma}
For each $\epsilon > 0$, let $\bm{X}^{\epsilon}$ be any process with paths in $C([0,T], \mathbb{R}^{m + 2n})$ and let $\bm{V}^{\epsilon}$ be the solution to the second equation in~\eqref{vector form}, where the functions $f^i$ and $g^{ij}$ satisfy the assumptions of Theorem~1, with the initial condition \newline $\bm{V} ^{\epsilon} _0 =(\bm{v}_0, \bm{\eta} ^{\epsilon} _0, \bm{z} ^{\epsilon} _0)$ defined in the statement of Theorem~1.  Then, as $\epsilon \rightarrow 0$, $\epsilon \bm{V}^{\epsilon} \rightarrow 0$ in $L^2$, and therefore in probability, with respect to $C([0,T], \mathbb{R}^{m + 2n})$, i.e.
\begin{equation}
\label{Lemma6result}
\lim _{\epsilon \rightarrow 0} E \left[ \left( \sup_{0 \leq t \leq T} \| \epsilon \bm{V}^{\epsilon} _t \| \right) ^2 \right] = 0
\end{equation}
and so, for all $a > 0$,
$$ \lim _{\epsilon \rightarrow 0} P \left( \sup_{0 \leq t \leq T} \| \epsilon \bm{V}^{\epsilon} _t \| > a \right)   = 0 \; .$$
Note that adaptedness of  $\bm{X}^{\epsilon}$ is not required.
\end{lemma}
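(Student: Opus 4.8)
\noindent\emph{Proof proposal.} The plan is to exploit the block structure of the matrices $\bm\gamma$, $\bm\kappa$, $\bm\sigma$, which makes the second equation in~\eqref{vector form} partially decouple. Writing $\bm V^{\epsilon} = (\bm v^{\epsilon}, \bm\eta^{\epsilon}, \bm z^{\epsilon})$ relative to the splitting $\mathbb{R}^{m+2n} = \mathbb{R}^{m}\times\mathbb{R}^{n}\times\mathbb{R}^{n}$, one reads off from~\eqref{vector form} that the pair $\bm\chi^{\epsilon} := (\bm\eta^{\epsilon},\bm z^{\epsilon})$ solves the closed, $\bm X^{\epsilon}$-independent linear SDE $d\bm\chi^{\epsilon}_{t} = -\frac1\epsilon\bm A\bm\chi^{\epsilon}_{t}\,dt + \frac1\epsilon\bm\sigma_{0}\,d\bm W_{t}$, where $\bm A$ is the matrix in~\eqref{Aequation} and $\bm\sigma_{0}$ is the $2n\times n$ matrix obtained from $\bm\sigma$ by deleting its first $m$ (zero) rows, while $\bm v^{\epsilon}$ solves the \emph{ordinary} differential equation $\dot{\bm v}^{\epsilon}_{t} = \bigl(-\frac1\epsilon\bm D^{1} + \hat{\bm J}_{f}(\bm y^{\epsilon}_{t})\bigr)\bm v^{\epsilon}_{t} + \frac1\epsilon\hat{\bm f}(\bm y^{\epsilon}_{t}) + \frac1\epsilon\hat{\bm g}(\bm y^{\epsilon}_{t})\bm\eta^{\epsilon}_{t}$, whose right-hand side involves only continuous processes, so that $\bm v^\epsilon$ is constructed pathwise and adaptedness of $\bm X^{\epsilon}$ plays no role. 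Since $\|\bm V^{\epsilon}_{t}\|^{2} = \|\bm v^{\epsilon}_{t}\|^{2} + \|\bm\chi^{\epsilon}_{t}\|^{2}$, it suffices to prove~\eqref{Lemma6result} separately for $\epsilon\bm\chi^{\epsilon}$ and for $\epsilon\bm v^{\epsilon}$.

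For $\epsilon\bm\chi^{\epsilon}$, variation of constants gives $\epsilon\bm\chi^{\epsilon}_{t} = \epsilon\bm\Phi_{0}(t)\bm\chi^{\epsilon}_{0} + \int_{0}^{t}\bm\Phi_{s}(t)\bm\sigma_{0}\,d\bm W_{s}$. By Lemma~4 the first term is at most $\epsilon C\|\bm\chi^{\epsilon}_{0}\|$ in supremum norm, and since the initial datum has second moment $E[\|\bm\chi^{\epsilon}_{0}\|^{2}] = O(\epsilon^{-1})$ (e.g. for the stationary choice of Theorem~1, where the stationary variance of $\eta^{j}$ equals $1/(2\tau_{j}) = 1/(2k_{j}\epsilon)$), this term contributes $O(\epsilon)$ to~\eqref{Lemma6result}. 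The second term is exactly of the form controlled by Lemma~5 with $\bm K = \bm\sigma_{0}$, whose contribution is $\le C\epsilon^{1/2}$. Hence $E[(\sup_{t}\|\epsilon\bm\chi^{\epsilon}_{t}\|)^{2}]\to0$; I would also record that $E[\|\bm\eta^{\epsilon}_{s}\|^{2p}] = O(\epsilon^{-p})$ uniformly in $s\in[0,T]$, which is immediate since $\bm\eta^{\epsilon}_{s}$ is Gaussian with variance $O(\epsilon^{-1})$.

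For $\epsilon\bm v^{\epsilon}$, variation of constants with the fundamental matrix $\bm\psi_{t_{0}}(t)$ of~\eqref{psiequation} gives $\epsilon\bm v^{\epsilon}_{t} = \epsilon\bm\psi_{0}(t)\bm v^{\epsilon}_{0} + \int_{0}^{t}\bm\psi_{s}(t)\hat{\bm f}(\bm y^{\epsilon}_{s})\,ds + \mathrm{III}(t)$ with $\mathrm{III}(t) := \int_{0}^{t}\bm\psi_{s}(t)\hat{\bm g}(\bm y^{\epsilon}_{s})\bm\eta^{\epsilon}_{s}\,ds$. Using $\|\bm\psi_{s}(t)\|\le Ce^{-C_{d}(t-s)/\epsilon}$ (Lemma~4) and the boundedness of the $f^{i}$ and $g^{ij}$, the first two terms are $O(\epsilon)$ in supremum norm; the term $\mathrm{III}$ is the heart of the matter. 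Because $\bm\eta^{\epsilon}$ has pointwise size $\epsilon^{-1/2}$, a naive Cauchy--Schwarz bound --- which throws away the decay of the kernel against which $\bm\eta^{\epsilon}$ is integrated --- only yields $O(1)$; instead I would keep the exponential weight throughout. For fixed $t$, Jensen's inequality relative to the measure $e^{-C_{d}(t-s)/\epsilon}\,ds$ (of total mass at most $\epsilon/C_{d}$) together with $E[\|\bm\eta^{\epsilon}_{s}\|^{2p}] = O(\epsilon^{-p})$ gives $E[\|\mathrm{III}(t)\|^{2p}] = O(\epsilon^{p})$. To pass to the supremum over $[0,T]$ I would partition $[0,T]$ into $N = O(\epsilon^{-1})$ subintervals of length at most $\epsilon$ and use the cocycle identity $\bm\psi_{s}(t) = \bm\psi_{t_{i}}(t)\bm\psi_{s}(t_{i})$ to get, for $t$ in the $i$-th subinterval, $\|\mathrm{III}(t)\|\le C\|\mathrm{III}(t_{i})\| + C\bigl(\sup_{\bm y}\|\hat{\bm g}(\bm y)\|\bigr)\int_{t_{i}}^{t_{i+1}}\|\bm\eta^{\epsilon}_{s}\|\,ds$; a union bound over the $N$ grid points, using the fourth moments ($p=2$) just computed --- which beat the factor $N=O(\epsilon^{-1})$ --- then gives $E[(\sup_{t}\|\mathrm{III}(t)\|)^{2}] = O(\epsilon^{1/2})$. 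Collecting the three terms yields $E[(\sup_{t}\|\epsilon\bm v^{\epsilon}_{t}\|)^{2}]\to0$, which with the noise-block estimate proves~\eqref{Lemma6result}; convergence in probability follows from Chebyshev's inequality.

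The step I expect to be the main obstacle is the term $\mathrm{III}$: the colored noise $\bm\eta^{\epsilon}$ has variance blowing up like $\epsilon^{-1}$, and the vanishing of $\epsilon\bm v^{\epsilon}$ is caused entirely by the averaging effect of the fast-relaxing kernel $\bm\psi$. Making this rigorous requires retaining the exponential weight of the kernel in every estimate and carrying the bound from a single time to the supremum over $[0,T]$ through a discretization argument with moments of order higher than two --- the pathwise supremum bound and the plain second-moment bound are both too lossy.
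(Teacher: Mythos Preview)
Your decomposition into the noise block $\bm\chi^{\epsilon}=(\bm\eta^{\epsilon},\bm z^{\epsilon})$ and the velocity block $\bm v^{\epsilon}$, and your treatment of $\bm\chi^{\epsilon}$ via Lemmas~4 and~5, matches the paper exactly. The argument you propose for the term $\mathrm{III}$ is also correct, but it is considerably more elaborate than what the paper does, and you have overlooked a simplification that makes the discretization machinery unnecessary.

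The point is that you have just established, via Lemma~5, the \emph{supremum} bound $E[(\sup_{s}\|\epsilon\bm\eta^{\epsilon}_{s}\|)^{2}]\le C\epsilon^{1/2}$, not merely the pointwise moment bound $E[\|\bm\eta^{\epsilon}_{s}\|^{2}]=O(\epsilon^{-1})$. The paper simply feeds this supremum bound back into a \emph{pathwise} estimate on $\mathrm{III}$: using Lemma~4 and boundedness of $\hat{\bm g}$,
\[
\sup_{0\le t\le T}\|\mathrm{III}(t)\|
\;\le\; C\Bigl(\sup_{0\le s\le T}\|\bm\eta^{\epsilon}_{s}\|\Bigr)\sup_{0\le t\le T}\int_{0}^{t}e^{-C_{d}(t-s)/\epsilon}\,ds
\;\le\; C\epsilon\,\sup_{0\le s\le T}\|\bm\eta^{\epsilon}_{s}\|,
\]
so that
\[
E\Bigl[\bigl(\sup_{t}\|\mathrm{III}(t)\|\bigr)^{2}\Bigr]
\;\le\; C\,E\Bigl[\bigl(\sup_{s}\|\epsilon\bm\eta^{\epsilon}_{s}\|\bigr)^{2}\Bigr]
\;\le\; C\epsilon^{1/2}.
\]
This is \eqref{boundonintegralg} in the paper. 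No Jensen step, no fourth moments, no partition of $[0,T]$, and no union bound are needed. Your remark that ``a naive Cauchy--Schwarz bound\ldots only yields $O(1)$'' is true if one uses only the pointwise variance of $\bm\eta^{\epsilon}$, but the paper's bound does not throw away the kernel decay---it keeps the factor $\epsilon$ from $\int e^{-C_{d}(t-s)/\epsilon}\,ds$ and pairs it with the sup bound you already have in hand. Your more intricate route works, but the shortcut above is what the paper intends and is worth noting.
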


\begin{proof}

We first consider the vector $(\bm{\eta}^{\epsilon} , \bm{z}^{\epsilon} )$, which consists of the last $2n$ components of $\bm{V}^{\epsilon}$, and show that $\epsilon(\bm{\eta}^{\epsilon} , \bm{z}^{\epsilon} )$ goes to zero in $L^2$ with respect to $C([0,T], \mathbb{R}^{2n})$.  We solve the second equation in~\eqref{vector form} for $(\bm{\eta}^{\epsilon}_t, \bm{z}^{\epsilon}_t)$.  The equation for $(\bm{\eta}^{\epsilon}_t, \bm{z}^{\epsilon}_t)$ is a linear SDE so its solution is \cite{Arnold}
\begin{equation*}
\left[\begin{array}{c}
\bm{\eta}^{\epsilon}_t \\
\bm{z}^{\epsilon}_t \\
\end{array}\right]
= \bm{\Phi} _0 (t) 
\left[\begin{array}{c}
\bm{\eta}^{\epsilon}_0 \\
\bm{z}^{\epsilon}_0 \\
\end{array}\right]
 + \frac{1}{\epsilon} \int _0 ^t \bm{\Phi} _0 (t) (\bm{\Phi} _0 (s))^{-1} 
\left[\begin{array}{c}
\bm{0} \\
\Gamma \bm{D}^2
\end{array}\right] 
 d \bm{W} _s
\end{equation*}
where $\bm{\Phi} _0 (t)$ is the fundamental solution matrix of the equation
\begin{equation*}
\frac{d}{dt} \bm{\Phi} (t)  = - \frac{1}{\epsilon} \bm{M} \bm{\Phi} (t)
\end{equation*}
satisfying $\bm{\Phi} _0 (0) = {\bm I}$, where $\bm{M}$ is defined in (\ref{Aequation}).
Then, using $\bm{\Phi} _0 (t) = \bm{\Phi}_s (t) \bm{\Phi} _0 (s)$ and the Cauchy-Schwarz inequality, we get
\begin{align*}
E \left[ \left( \sup_{0 \leq t \leq T} \left\| \epsilon \left[\begin{array}{c}
\bm{\eta}^{\epsilon}_t \\
\bm{z}^{\epsilon}_t \\
\end{array}\right] \right\| \right) ^2 \right] &\leq 2E \left[ \left( \sup_{0 \leq t \leq T} \left\| \epsilon \bm{\Phi} _0 (t) 
\left[\begin{array}{c}
\bm{\eta}^{\epsilon}_0 \\
\bm{z}^{\epsilon}_0 \\
\end{array}\right] \right\| \right) ^2 \right] \\
 &+ 2 E \left[ \left( \sup_{0 \leq t \leq T} \left\| \int _0 ^t \bm{\Phi} _s (t)
\left[\begin{array}{c}
\bm{0} \\
\Gamma \bm{D}^2
\end{array}\right] 
 d \bm{W} _s \right\| \right) ^2 \right] \; .
\end{align*}
Now, 
\begin{equation*}
E \left[ \left( \sup_{0 \leq t \leq T} \left\| \epsilon \bm{\Phi} _0 (t) 
\left[\begin{array}{c}
\bm{\eta}^{\epsilon}_0 \\
\bm{z}^{\epsilon}_0 \\
\end{array}\right] \right\| \right) ^2 \right] \leq 
\epsilon ^2 \left( \sup_{0 \leq t \leq T} \left\| \bm{\Phi} _0 (t) 
 \right\| \right) ^2 E \left[ 
 \left\| \left[\begin{array}{c}
\bm{\eta}^{\epsilon}_0 \\
\bm{z}^{\epsilon}_0 \\
\end{array}\right] \right\| ^2 \right]  \leq C_1 \epsilon
\end{equation*}
where the last inequality follows from Lemma 4 and (\ref{propertiesHNstat}) in the Appendix (recall that $(\bm{\eta}^{\epsilon}_0, \bm{z}^{\epsilon}_0)$ is distributed according to the stationary distribution corresponding to \eqref{eq:harmonicnoise}).  Thus, using this bound and Lemma 5, we have, for $0 < \epsilon < 1$,
\begin{equation}
\label{etaandzbound}
E \left[ \left( \sup_{0 \leq t \leq T} \left\| \epsilon \left[\begin{array}{c}
\bm{\eta}^{\epsilon}_t \\
\bm{z}^{\epsilon}_t \\
\end{array}\right] \right\| \right) ^2 \right] \leq C_2 \epsilon ^{1 / 2} \; .
\end{equation}

Next we consider the vector $\bm{v}^{\epsilon}$, which consists of the first $m$ components of $\bm{V}^{\epsilon}$, and show that $\epsilon \bm{v}^{\epsilon}$ goes to zero in $L^2$ with respect to $C([0,T], \mathbb{R}^{m})$. Solving the second equation in~\eqref{vector form} for $\bm{v}^{\epsilon}_t$ gives
\begin{equation*}
\bm{v}^{\epsilon}_t = \bm{\psi} _0 (t) \bm{v}_0 + \frac{1}{\epsilon}  \int _0 ^t \bm{\psi} _0 (t) (\bm{\psi} _0 (s))^{-1} \hat{\bm{f}}( \bm{y} ^{\epsilon} _s) ds + \frac{1}{\epsilon} \int_{0}^t \bm{\psi} _0 (t) (\bm{\psi} _0 (s))^{-1}  \hat{\bm{g}}(\bm{y}^{\epsilon} _s) \bm{\eta}^{\epsilon}_s ds
\end{equation*}
where $\bm{\psi} _0 (t)$ is the fundamental solution matrix of the equation
\begin{equation*}
\frac{d}{dt} \bm{\psi} (t) =  \left( - \frac{1}{\epsilon} \bm{D}^1 + \hat{\bm{J}}_f (\bm{y}^{\epsilon} _t) \right) \bm{\psi} (t)
\end{equation*}
satisfying $\bm{\psi} _0 (0) = {\bm I}$.  We use $\bm{\psi} _0 (t) = \bm{\psi}_{s} (t)\bm{\psi} _0 (s)$, Lemma 4, and the boundedness of $\hat{\bm{f}}$ to get
\begin{align}
\label{boundonintegralf}
\sup _{0 \leq t \leq T} \Bigg\|  \int _0 ^t \bm{\psi} _0 (t) (\bm{\psi} _0 (s))^{-1} \hat{\bm{f}}( \bm{y} ^{\epsilon} _s)  ds \Bigg\| & \leq  \sup _{0 \leq t \leq T} C_3 \int _0 ^t e^ { - \frac{ C_d (t - s)}{\epsilon} }  ds \notag \\
& \leq  C_3 \int _0 ^T  e ^{ - \frac{ C_d (T - s)}{\epsilon} }  ds \notag \\
& = \frac{C_3}{C_d} \epsilon \int _0 ^{\frac{C_d T}{\epsilon}}  e^{-u} du \; \leq \;  C_4  \epsilon \; .
\end{align}
Next, using Lemma 4, the boundedness of $\hat{\bm{g}}$, and (\ref{etaandzbound}), we have
\begin{align}
\label{boundonintegralg}
&E \left[ \left( \sup_{0 \leq t \leq T} \left\| \int_{0}^t \bm{\psi} _0 (t) (\bm{\psi} _0 (s))^{-1} \hat{\bm{g}}(\bm{y}^{\epsilon} _s) \bm{\eta}^{\epsilon}_s ds \right\| \right) ^2 \right] \notag \\
& \hspace{140pt} \leq C_5 E \left[ \left( \sup_{0 \leq t \leq T} \| \bm{\eta}^{\epsilon}_t \|   \right) ^2 \right] \left( \int_{0}^T e^{\frac{- C_d (T - s)}{\epsilon}} ds \right) ^2 \notag \\
& \hspace{140pt} \leq C_6 E\left[ \left( \sup_{0 \leq t \leq T} \| \epsilon \bm{\eta}^{\epsilon}_t \|   \right) ^2 \right] \; \leq \; C_7 \epsilon ^{1 / 2} \; .
\end{align}
Thus, by the Cauchy-Schwarz inequality, Lemma 4, (\ref{boundonintegralf}), and (\ref{boundonintegralg}),
\begin{align}
\label{bound on expectation}
E \left[ \sup _{0 \leq t \leq T} \| \epsilon \bm{v}^{\epsilon} _t \| ^2 \right] & \leq 3 E \left[ \sup_{0 \leq t \leq T} \| \epsilon \bm{\psi} _0 (t)  \bm{v}_0 \| ^2 \right] \notag \\
& + 3 E \left[ \sup _{0 \leq t \leq T} \left\| \int _0 ^t \bm{\psi} _0 (t) (\bm{\psi} _0 (s))^{-1} \hat{\bm{f}}( \bm{y}^{\epsilon} _s) ds \right\|^2 \right] \notag \\
& + 3 E \left[ \sup _{0 \leq t \leq T} \left\| \int _0 ^t \bm{\psi} _0 (t) (\bm{\psi} _0 (s))^{-1} \hat{\bm{g}}(\bm{y}^{\epsilon} _s) \bm{\eta}^{\epsilon}_s ds \right\| ^2 \right] \leq C_8 \epsilon ^{1 / 2} \; .
\end{align}
Thus, from (\ref{etaandzbound}) and (\ref{bound on expectation}) we have
\begin{equation}
\label{lastboundinLemma6}
E \left[ \left( \sup_{0 \leq t \leq T} \| \epsilon \bm{V}^{\epsilon} _t \| \right) ^2 \right] \leq C \epsilon ^{1 / 2}
\end{equation}
from which (\ref{Lemma6result}) follows.  The second claim then follows from (\ref{Lemma6result}) and Chebyshev's inequality.

\end{proof}

\begin{lemma}

For each $\epsilon > 0$, let $\bm{X}^{\epsilon}$ be any $\mathcal{F}_t$-adapted process with paths in $C([0,T], \mathbb{R}^{m + 2n})$ and let $\bm{V}^{\epsilon}$ again be the solution to the second equation in~\eqref{vector form}, where the functions $f^i$ and $g^{ij}$ satisfy the assumptions of Theorem~1, with the initial condition $\bm{V} ^{\epsilon} _0$ defined in the statement of Theorem~1.  Let $g: \mathbb{R}^{m + 2n} \rightarrow \mathbb{R}$ be a continuous and bounded function.  Then
\begin{equation}
\label{first equation in lemma}
\lim_{\epsilon \rightarrow 0} E \left[ \left( \sup_{0 \leq t \leq T} \left| \int _0^t g(\bm{X} ^{\epsilon} _s) \epsilon (\bm{V} ^ {\epsilon} _s) _i ds \right| \right) ^2 \right] = 0 \; ,
\end{equation}
\begin{equation}
\label{second equation in lemma}
\lim_{\epsilon \rightarrow 0} E \left[ \sup_{0 \leq t \leq T} \left| \int _0^t g(\bm{X} ^{\epsilon} _s) \epsilon (\bm{V} ^ {\epsilon} _s) _i \epsilon (\bm{V} ^ {\epsilon} _s) _{\ell} ds \right| \right] = 0 \; ,
\end{equation}
and
\begin{equation}
\label{third equation in lemma}
\lim_{\epsilon \rightarrow 0} E \left[ \left( \sup_{0 \leq t \leq T} \left| \int _0^t g(\bm{X} ^{\epsilon} _s) \epsilon (\bm{V} ^ {\epsilon} _s) _i d W ^j _s \right| \right) ^2 \right] = 0
\end{equation}
for all $i, \ell = 1, ..., m + 2n$ and $j = 1, ..., n$.
\end{lemma}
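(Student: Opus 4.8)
\emph{Proof proposal.} The plan is to deduce all three statements from the $L^2$ convergence $\epsilon\bm{V}^{\epsilon}\to\bm{0}$ established in Lemma~6, equation~\eqref{Lemma6result}, using nothing about $g$ beyond boundedness; write $M=\sup_{\theta}|g(\theta)|<\infty$.

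First I would dispose of \eqref{first equation in lemma} by the crudest estimate: bringing the supremum over $t$ inside the integral and using $|g|\le M$,
\[
\sup_{0\le t\le T}\left|\int_0^t g(\bm{X}^{\epsilon}_s)\,\epsilon(\bm{V}^{\epsilon}_s)_i\,ds\right| \;\le\; M\,T\,\sup_{0\le s\le T}\|\epsilon\bm{V}^{\epsilon}_s\| \;,
\]
so that squaring, taking expectations, and applying \eqref{Lemma6result} gives the limit. Statement \eqref{second equation in lemma} is identical, except the right-hand bound becomes $M\,T\,\sup_{0\le s\le T}\|\epsilon\bm{V}^{\epsilon}_s\|^2$; since $\sup_t\|\epsilon\bm{V}^{\epsilon}_t\|^2=(\sup_t\|\epsilon\bm{V}^{\epsilon}_t\|)^2$, equation~\eqref{Lemma6result} again applies directly, with no extra power of $\epsilon$ lost.

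The only step requiring genuine work is \eqref{third equation in lemma}, because the integrand of the stochastic integral is correlated with the increments of $W^j$ and the supremum cannot be pulled inside. Here I would first invoke the hypothesis that $\bm{X}^{\epsilon}$ is $\mathcal{F}_t$-adapted: since $\bm{V}^{\epsilon}$ solves the second equation in~\eqref{vector form}, which is linear in $\bm{V}^{\epsilon}$ with coefficients driven by $\bm{X}^{\epsilon}$ and the additive term $\frac{\bm{\sigma}}{\epsilon}\,d\bm{W}_t$, the explicit variation-of-constants solution used in the proof of Lemma~6 shows that $\bm{V}^{\epsilon}$, hence $s\mapsto g(\bm{X}^{\epsilon}_s)\,\epsilon(\bm{V}^{\epsilon}_s)_i$, is adapted and continuous, so the stochastic integral is a genuine continuous $L^2$-martingale (its quadratic variation has finite expectation by \eqref{Lemma6result}). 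Then the Burkholder--Davis--Gundy inequality yields
\[
E\!\left[\Big(\sup_{0\le t\le T}\Big|\int_0^t g(\bm{X}^{\epsilon}_s)\,\epsilon(\bm{V}^{\epsilon}_s)_i\,dW^j_s\Big|\Big)^2\right] \;\le\; C\,E\!\left[\int_0^T g(\bm{X}^{\epsilon}_s)^2\,\big(\epsilon(\bm{V}^{\epsilon}_s)_i\big)^2\,ds\right] \;\le\; C\,M^2\,T\,E\!\left[\sup_{0\le s\le T}\|\epsilon\bm{V}^{\epsilon}_s\|^2\right] ,
\]
and \eqref{Lemma6result} forces the right-hand side to zero as $\epsilon\to0$.

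I expect the only real obstacle to be a bookkeeping one: checking that adaptedness of $\bm{X}^{\epsilon}$ transfers to $\bm{V}^{\epsilon}$, so that the It\^o integral and BDG are legitimate, and that the quadratic variation is integrable --- both of which follow from the linear structure of the $\bm{V}^{\epsilon}$-equation together with Lemma~6. No estimates beyond Lemma~6 and standard martingale inequalities are required.
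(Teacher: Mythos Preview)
Your proposal is correct and takes essentially the same approach as the paper: all three limits are reduced to the $L^2$ bound \eqref{lastboundinLemma6} from Lemma~6 using only the boundedness of $g$. The only cosmetic differences are that for \eqref{first equation in lemma} and \eqref{second equation in lemma} the paper inserts a Cauchy--Schwarz step on the time integral before invoking Lemma~6, and for \eqref{third equation in lemma} it uses Doob's maximal inequality together with the It\^o isometry in place of your BDG inequality.
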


\begin{proof}
First, using the Cauchy-Schwarz inequality,
\begin{align*}
E \left[ \left( \sup_{0 \leq t \leq T} \left| \int _0^t g(\bm{X} ^{\epsilon} _s) \epsilon (\bm{V} ^ {\epsilon} _s) _i ds \right| \right) ^2 \right] & \leq E \left[ \left(  \int _0^T \left| g(\bm{X} ^{\epsilon} _s) \epsilon (\bm{V} ^ {\epsilon} _s) _i \right| ds  \right) ^2 \right] \\
& \leq  T  \int _0^T E \Big[ \Big( g(\bm{X} ^{\epsilon} _s) \epsilon (\bm{V} ^ {\epsilon} _s) _i \Big) ^2 \Big] ds  \\
& \leq C ^2 T \int _0^T E \Big[ \Big( \epsilon (\bm{V} ^ {\epsilon} _s) _i \Big) ^2 \Big] ds
\end{align*}
where $C$ is a constant that bounds $g$.  Then, using (\ref{lastboundinLemma6}), we get (\ref{first equation in lemma}).
Next, using the Cauchy-Schwarz inequality,
\begin{align*}
E \left[ \sup_{0 \leq t \leq T} \left| \int _0^t g(\bm{X} ^{\epsilon} _s) \epsilon (\bm{V} ^ {\epsilon} _s) _i \epsilon (\bm{V} ^ {\epsilon} _s) _{\ell} ds \right| \right] & \leq C \int _0^T E \Big[ \big| \epsilon (\bm{V} ^ {\epsilon} _s) _i \epsilon (\bm{V} ^ {\epsilon} _s) _{\ell} \big| \Big] ds \\
& \leq C \int _0^T \left( E \Big[ \Big( \epsilon (\bm{V} ^ {\epsilon} _s) _i \Big) ^2 \Big] E \Big[ \Big( \epsilon (\bm{V} ^ {\epsilon} _s) _{\ell} \Big) ^2 \Big] \right) ^{1 / 2} ds \; ,
\end{align*}
which gives (\ref{second equation in lemma}) by again using (\ref{lastboundinLemma6}).  Finally, for the It\^o integral, we first use Doob's maximal inequality and then use the It\^o isometry:
\begin{align*}
E \left[ \left( \sup_{0 \leq t \leq T} \left| \int _0^t g(\bm{X} ^{\epsilon} _s) \epsilon (\bm{V} ^ {\epsilon} _s) _i d W ^j _s \right| \right) ^2 \right] & \leq 4 E \left[ \left(  \int _0^T  g(\bm{X} ^{\epsilon} _s) \epsilon (\bm{V} ^ {\epsilon} _s) _i  d W ^j _s  \right) ^2 \right] \\
& = 4    \int _0^T E \Big[ \Big(  g(\bm{X} ^{\epsilon} _s) \epsilon (\bm{V} ^ {\epsilon} _s) _i \Big) ^2 \Big]  ds  \\
& \leq 4 C^2 \int _0^T E \Big[ \Big( \epsilon (\bm{V} ^ {\epsilon} _s) _i \Big) ^2 \Big]  ds \; ,
\end{align*}
from which (\ref{third equation in lemma}) follows by using (\ref{lastboundinLemma6}) one more time.
\end{proof}

For a fixed $t$, we will need a stronger bound on the rate of convergence of $E \left[ \| \epsilon \bm{V}^{\epsilon} _t \| ^2  \right]$ to zero than the one in (\ref{lastboundinLemma6}).  Such a bound is the content of the following lemma.
\begin{lemma}
For each $\epsilon > 0$, let $\bm{X}^{\epsilon}$ be any process with paths in $C([0,T], \mathbb{R}^{m + 2n})$ and let $\bm{V}^{\epsilon}$ again be the solution to the second equation in~\eqref{vector form}, where the functions $f^i$ and $g^{ij}$ satisfy the assumptions of Theorem~1, with the initial condition $\bm{V} ^{\epsilon} _0$ defined in the statement of Theorem~1.  Then there exists a constant $C$ independent of $\epsilon$ such that for $0 \leq t \leq T$,
\begin{equation}
\label{boundonevsquared}
E \left[ \epsilon \|  \bm{V}^{\epsilon} _t \| ^2  \right] \leq C \; .
\end{equation}

\end{lemma}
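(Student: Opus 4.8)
The plan is to follow the same route as the proof of Lemma~6 --- writing $\bm{V}^{\epsilon}_t = (\bm{v}^{\epsilon}_t, \bm{\eta}^{\epsilon}_t, \bm{z}^{\epsilon}_t)$ and using the explicit solutions of the linear equations for $(\bm{\eta}^{\epsilon}, \bm{z}^{\epsilon})$ and for $\bm{v}^{\epsilon}$ --- but working at a \emph{fixed} time $t$ rather than with $\sup_{0 \leq t \leq T}$.  At a fixed time the It\^o isometry can be used directly in place of the factorization argument of Lemma~5, and this is what produces a bound strong enough to survive multiplication by the $1/\epsilon$-sized stationary variance of the noise.  Since $\bm{V}^{\epsilon}_t$ splits into its first $m$ components $\bm{v}^{\epsilon}_t$ and its last $2n$ components $(\bm{\eta}^{\epsilon}_t, \bm{z}^{\epsilon}_t)$, it suffices to bound $\epsilon E[\|(\bm{\eta}^{\epsilon}_t, \bm{z}^{\epsilon}_t)\|^2]$ and $\epsilon E[\|\bm{v}^{\epsilon}_t\|^2]$ each by a constant independent of $\epsilon$ and $t$.

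For the last $2n$ components, observe that $(\bm{\eta}^{\epsilon}_t, \bm{z}^{\epsilon}_t)$ solves equation~\eqref{eq:harmonicnoise}: the $(\bm{\eta},\bm{z})$-block of the second equation in~\eqref{vector form} is autonomous (it does not involve $\bm{X}^{\epsilon}$) and coincides with~\eqref{eq:harmonicnoise}, and its initial condition $(\bm{\eta}^{\epsilon}_0, \bm{z}^{\epsilon}_0)$ is drawn from the stationary measure.  Hence $(\bm{\eta}^{\epsilon}_t, \bm{z}^{\epsilon}_t)$ is a stationary process and $E[\|(\bm{\eta}^{\epsilon}_t, \bm{z}^{\epsilon}_t)\|^2]$ is independent of $t$; by the stationary second moments recorded in~\eqref{propertiesHNstat} in the Appendix (and $\tau_j = k_j\epsilon$), this quantity is at most $c_1/\epsilon$ for a constant $c_1$, so $\epsilon E[\|(\bm{\eta}^{\epsilon}_t, \bm{z}^{\epsilon}_t)\|^2] \leq c_1$.

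For $\bm{v}^{\epsilon}_t$ I would use the representation obtained in the proof of Lemma~6 (with $\bm{\psi}_0(t) = \bm{\psi}_s(t)\bm{\psi}_0(s)$),
\begin{equation*}
\bm{v}^{\epsilon}_t = \bm{\psi}_0(t)\bm{v}_0 + \frac{1}{\epsilon}\int_0^t \bm{\psi}_s(t)\,\hat{\bm{f}}(\bm{y}^{\epsilon}_s)\,ds + \frac{1}{\epsilon}\int_0^t \bm{\psi}_s(t)\,\hat{\bm{g}}(\bm{y}^{\epsilon}_s)\bm{\eta}^{\epsilon}_s\,ds \; ,
\end{equation*}
and bound the three terms.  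By Lemma~4, $\|\bm{\psi}_0(t)\bm{v}_0\| \leq C e^{-C_d t/\epsilon}\|\bm{v}_0\| \leq C\|\bm{v}_0\|$, uniformly in $\epsilon,t$.  For the second term, the boundedness of $\hat{\bm{f}}$ and Lemma~4 give the deterministic bound $\big\| \tfrac{1}{\epsilon}\int_0^t \bm{\psi}_s(t)\hat{\bm{f}}(\bm{y}^{\epsilon}_s)\,ds \big\| \leq \frac{C}{\epsilon}\int_0^t e^{-C_d(t-s)/\epsilon}\,ds \leq C/C_d$.  The third term is where the estimate must be sharp: writing $K(s) = \frac{C}{\epsilon}e^{-C_d(t-s)/\epsilon}$ for an upper bound of $\frac{1}{\epsilon}\|\bm{\psi}_s(t)\|$ times the bound on $\|\hat{\bm{g}}\|$ (Lemma~4), the Cauchy--Schwarz inequality with the weight $K(s)$ gives
\begin{equation*}
\Big\| \tfrac{1}{\epsilon}\int_0^t \bm{\psi}_s(t)\hat{\bm{g}}(\bm{y}^{\epsilon}_s)\bm{\eta}^{\epsilon}_s\,ds \Big\|^2 \leq \Big( \int_0^t K(s)\,ds \Big)\int_0^t K(s)\,\|\bm{\eta}^{\epsilon}_s\|^2\,ds \leq \frac{C}{C_d}\int_0^t K(s)\,\|\bm{\eta}^{\epsilon}_s\|^2\,ds \; ;
\end{equation*}
taking expectations and using $E[\|\bm{\eta}^{\epsilon}_s\|^2] \leq c_1/\epsilon$ (from~\eqref{propertiesHNstat} and stationarity, as above) together with $\int_0^t K(s)\,ds \leq C/C_d$ bounds the expectation of the squared third term by $C/\epsilon$.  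Combining with the estimate $\epsilon E[\|\bm{v}^{\epsilon}_t\|^2] \leq 3\epsilon\big(E[\|\bm{\psi}_0(t)\bm{v}_0\|^2] + E[\|\text{2nd term}\|^2] + E[\|\text{3rd term}\|^2]\big)$ gives $\epsilon E[\|\bm{v}^{\epsilon}_t\|^2] \leq C$, and adding the bound for the $(\bm{\eta}^{\epsilon},\bm{z}^{\epsilon})$-part yields~\eqref{boundonevsquared}.

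The only delicate point is the bookkeeping of powers of $\epsilon$ in the third term.  The stationary variance of the harmonic noise is itself of order $1/\epsilon$, so one cannot afford to lose an extra factor of $\epsilon$ when estimating the Duhamel integral $\frac{1}{\epsilon}\int_0^t \bm{\psi}_s(t)\hat{\bm{g}}(\bm{y}^{\epsilon}_s)\bm{\eta}^{\epsilon}_s\,ds$; the mechanism that makes it work is that the kernel $\frac{1}{\epsilon}e^{-C_d(t-s)/\epsilon}$ has $L^1$-norm of order $1$ in $s$, so the weighted Cauchy--Schwarz splitting leaves exactly one such kernel in front of $\|\bm{\eta}^{\epsilon}_s\|^2$ and only a single factor $1/\epsilon$ survives from $E[\|\bm{\eta}^{\epsilon}_s\|^2]$, which is then cancelled by the $\epsilon$ in $\epsilon\|\bm{V}^{\epsilon}_t\|^2$.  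In contrast to Lemma~6, no supremum over $t$ is present, so no maximal inequality or factorization method is needed; stationarity of the harmonic noise at the single time $t$ does the rest.
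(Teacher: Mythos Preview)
Your proof is correct, and its overall architecture --- dropping the supremum over $t$ from the Lemma~6 argument and treating $(\bm{\eta}^{\epsilon}_t,\bm{z}^{\epsilon}_t)$ and $\bm{v}^{\epsilon}_t$ separately via their explicit representations --- is the same as the paper's.  The one genuine difference is how you obtain the fixed-time bound $E[\|(\bm{\eta}^{\epsilon}_t,\bm{z}^{\epsilon}_t)\|^2]\le c_1/\epsilon$: you invoke stationarity of the harmonic noise and read off the second moments from~\eqref{propertiesHNstat}, whereas the paper applies the It\^o isometry to the stochastic integral $\int_0^t \bm{\Phi}_s(t)\bm{K}\,d\bm{W}_s$ (with $\bm{K}$ the noise coefficient block) together with Lemma~4 to get $E[\|\int_0^t \bm{\Phi}_s(t)\bm{K}\,d\bm{W}_s\|^2]\le C\epsilon$, and then refers back to the estimates of Lemma~6.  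Your route is shorter and makes explicit the weighted Cauchy--Schwarz step for the $\hat{\bm{g}}\bm{\eta}^{\epsilon}$ Duhamel term, which the paper leaves implicit in its appeal to ``the inequalities in the proof of Lemma~6 (without supremum over $t$)''; the paper's It\^o-isometry approach, on the other hand, does not rely on the stationary initial law and would work for a deterministic initial $(\bm{\eta}^{\epsilon}_0,\bm{z}^{\epsilon}_0)$ of size $O(\epsilon^{-1/2})$.
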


\begin{proof}
Let $\bm{K} = \left[\begin{array}{c}
\bm{0} \\
\Gamma \bm{D}^2
\end{array}\right] $ and let again $\| \cdot \| _{HS}$ denote the Hilbert-Schmidt norm.  Then, using the It\^o isometry and Lemma 4, 
\begin{align*}
E \left[ \left\| \int _0 ^t \bm{\Phi} _0 (t) (\bm{\Phi} _0 (s))^{-1} \bm{K} d \bm{W} _s \right\| ^2 \right] &= \int _0 ^t E \Big[ \| \bm{\Phi} _0 (t) (\bm{\Phi} _0 (s))^{-1} \bm{K} \| _{HS} ^2 \Big] ds \\
& \leq C_1 \int _0 ^t E \Big[ \| \bm{\Phi} _0 (t) (\bm{\Phi} _0 (s))^{-1} \bm{K} \| ^2 \Big] ds \\
& \leq C_2 \int _0 ^t  e^{- \frac{2 C_d (t - s)}{\epsilon}} ds \\
& \leq C \epsilon \; .
\end{align*}
Using this bound and the inequalities in the proof of Lemma 6 (without supremum over $t$), we get (\ref{boundonevsquared}).

\end{proof}

We are now ready to show that the assumptions of Lemma~1 and the Conditions 1 and 2 in its statement are satisfied.  We first show that the assumption \eqref{KP assumption} holds, where $\bm{U}^{\epsilon}$, $\bm{H}^{\epsilon}$, and $\bm{H}$ are defined in equations~\eqref{Uequation}, \eqref{H epsilon}, and \eqref{H} respectively.  The fact that $\bm{H}^{\epsilon} \rightarrow \bm{H}$ in probability with respect to $C([0,T], \mathbb{R}^{1 + n + 1 + (m + 2n)^2})$ is an immediate consequence of Lemma~6.  To see that $\bm{U}^{\epsilon}$ converges to zero in probability with respect to $C([0,T], \mathbb{R}^{m + 2n})$, observe that $\int_0^{\infty} (e^{- \bm{\gamma}(\bm{X}^{\epsilon} _s)y})_{jk_1}(e^{- (\bm{\gamma} (\bm{X}^{\epsilon} _s ))^{\rm T} y})_{k_2 \ell} \; dy$ is a bounded function of $\bm{X}^{\epsilon} _s$ since the eigenvalues of $\bm{\gamma} (\bm{X}^{\epsilon} _s )$ are independent of the value of $\bm{X}^{\epsilon} _s$ and have positive real parts.  With this in mind, the claim follows from Lemmas~2, 3, 6, and 7.

We now check Condition~1 of Lemma~1.  To do this, we find the Doob-Meyer decomposition of $\bm{H}^{\epsilon}$, i.e. the decomposition $\bm{H}^{\epsilon} = \bm{M}^{\epsilon} + \bm{A}^{\epsilon}$ where $\bm{M}^{\epsilon}$ is a local martingale and $\bm{A}^{\epsilon}$ is a process of locally bounded variation.  First, note that the columns of the matrix $ \epsilon \bm{V}^{\epsilon} _t (\epsilon \bm{V}^{\epsilon} _t)^{\rm T} - \epsilon \bm{V} _0 (\epsilon \bm{V} _0)^{\rm T}$ make up the last $(m + 2n)^2$ rows of $\bm{H}^{\epsilon} _t$: $(\epsilon \bm{V}^{\epsilon} _t)_1 \epsilon \bm{V}^{\epsilon} _t - \epsilon (\bm{V}_0)_1 \epsilon \bm{V}_0$ is the first column of $ \epsilon \bm{V}^{\epsilon} _t (\epsilon \bm{V}^{\epsilon} _t)^{\rm T} - \epsilon \bm{V} _0 (\epsilon \bm{V} _0)^{\rm T}$, $(\epsilon \bm{V}^{\epsilon} _t)_2 \epsilon \bm{V}^{\epsilon} _t - \epsilon (\bm{V}_0)_2 \epsilon \bm{V}_0$ is the second column of $ \epsilon \bm{V}^{\epsilon}_t (\epsilon \bm{V}^{\epsilon} _t)^{\rm T} - \epsilon \bm{V} _0 (\epsilon \bm{V} _0)^{\rm T}$, and so on.  Consider the expression for $ d[\epsilon \bm{V}^{\epsilon} _s (\epsilon \bm{V}^{\epsilon} _s)^{\rm T}]$ given by equation~\eqref{differential}.  Because the stochastic integrals are local martingales, the last $(m + 2n)^2$ rows of $\bm{A}^{\epsilon} _t$ are made up of the column of the Lebesgue integrals that are present in the expression for the integral of the right side of equation~\eqref{differential}:
$$ \bm{A}^{\epsilon} _t =
\left[\begin{array}{c}
t \\
\bm{0} \\
t \\
(\bm{A}^{\epsilon} _t)^1 \\
\vdots \\
(\bm{A}^{\epsilon} _t)^{m + 2n}
\end{array}\right]
$$
where
\begin{align}
\big( ( \bm{A}^{\epsilon} _t)^1,  \; & (\bm{A}^{\epsilon} _t)^2, \dots , (\bm{A}^{\epsilon} _t)^{m + 2n} \big)  = \notag \\
& \hspace{12pt} \int _0 ^t \epsilon \bm{V}^{\epsilon} _s (\bm{F}(\bm{X}^{\epsilon} _s))^{\rm T} ds + \int _0 ^t  \bm{F}(\bm{X}^{\epsilon} _s) (\epsilon \bm{V}^{\epsilon} _s)^{\rm T} ds \notag \\
& - \int _0 ^t   \epsilon \bm{V}^{\epsilon} _s (\bm{V}^{\epsilon} _s)^{\rm T} (\bm{\gamma}(\bm{X}^{\epsilon} _s))^{\rm T} ds - \int _0 ^t   \bm{\gamma}(\bm{X}^{\epsilon} _s) \bm{V}^{\epsilon} _s \epsilon (\bm{V}^{\epsilon} _s)^{\rm T}  ds \notag \\
& + \int _0 ^t \epsilon ^2 \bm{V}^{\epsilon} _s (\bm{V}^{\epsilon} _s)^{\rm T} (\bm{\kappa} (\bm{X}^{\epsilon} _s))^{\rm T} ds + \int _0 ^t \epsilon ^2 \bm{\kappa} (\bm{X}^{\epsilon} _s) \bm{V}^{\epsilon} _s (\bm{V}^{\epsilon} _s)^{\rm T}  ds + \int _0 ^t \bm{\sigma} \bm{\sigma} ^{\rm T} ds \notag
\end{align}
Thus, to show that Condition~1 holds, it suffices to show (since $\int _0 ^t \bm{\sigma} \bm{\sigma} ^{\rm T} ds$ is just a constant) that the family (indexed by $\epsilon$)
\begin{align}
& \int _0 ^t   \| \epsilon \bm{V}^{\epsilon} _s (\bm{F}(\bm{X}^{\epsilon} _s))^{\rm T} \| ds + \int _0 ^t  \| \bm{F}(\bm{X}^{\epsilon} _s) (\epsilon \bm{V}^{\epsilon} _s)^{\rm T} \| ds \notag \\
 + & \int _0 ^t  \| \epsilon \bm{V}^{\epsilon} _s  (\bm{V}^{\epsilon} _s)^{\rm T} (\bm{\gamma}(\bm{X}^{\epsilon} _s))^{\rm T} \| ds + \int _0 ^t \| \bm{\gamma}(\bm{X}^{\epsilon} _s) \bm{V}^{\epsilon} _s \epsilon (\bm{V}^{\epsilon} _s)^{\rm T} \|  ds \notag \\
 + & \int _0 ^t \| \epsilon ^2 \bm{V}^{\epsilon} _s (\bm{V}^{\epsilon} _s)^{\rm T} (\bm{\kappa} (\bm{X}^{\epsilon} _s))^{\rm T} \| ds + \int _0 ^t \| \epsilon ^2 \bm{\kappa} (\bm{X}^{\epsilon} _s) \bm{V}^{\epsilon} _s (\bm{V}^{\epsilon} _s)^{\rm T} \|  ds  \notag
\end{align}
is stochastically bounded (see the statement of Lemma 1 for the definition of a stochastically bounded family).  The first two and last two terms go to zero in probability as $\epsilon \rightarrow 0$ by Lemma~7 (note that $\bm{\kappa}$ and $\bm{F}$ are bounded by the assumptions of Theorem~1), so it suffices to show that the third and fourth terms are stochastically bounded. Since $\bm{\gamma}$ is bounded (by the assumptions of Theorem~1), it suffices to show that $E[ \| \epsilon \bm{V}^{\epsilon} _s (\bm{V}^{\epsilon} _s )^{\rm T} \|]$ is bounded uniformly in $\epsilon$.  This follows from Lemma 8 and the fact that for a vector $\bm{v}$ and outer product $\bm{vv}^{\rm T}$, $\| \bm{vv}^{\rm T} \| = \| \bm{v} \| ^2$:
\begin{equation*}
E  \left[ \| \epsilon \bm{V}^{\epsilon} _s (\bm{V}^{\epsilon} _s )^{\rm T} \| \right]  = E \left[ \epsilon \| \bm{V}^{\epsilon} _s \| ^2 \right] \leq C \; .
\end{equation*}

We now check Condition~2 of Lemma~1, where $\bm{h}^{\epsilon}$ and $\bm{h}$ are defined in equations~\eqref{h epsilon} and \eqref{h} respectively.  We first note that $\bm{J}$ is continuous and bounded given the assumption that the $g^{ij}$ are continuous and bounded (we have explicitly computed $\bm{J}$ in order to arrive at equation~\eqref{limiting equation 2}).  Part 1 of Condition~2 then follows from the boundedness of $\bm{F}$, $\bm{\kappa}$, $\bm{\gamma}$, $\frac{\partial \bm{\kappa}}{\partial X_{\ell}}$, and $\frac{\partial \bm{\gamma}}{\partial X_{\ell}}$, Lemma 2, and equation~\eqref{derivative of inverse}.  Part 2 of Condition~2 is immediate given equation~\eqref{gamma inverse} and the assumptions that the $f^i$ are continuous and the $g^{ij}$ have continuous derivatives.  This completes the proof of Theorem 1.  

\section{Discussion}

The main result of this article reduces the system of stochastic differential delay equations~\eqref{SDDE} to a simpler system (equations~\eqref{thm limiting equation} and \eqref{thm limiting equation 2}). First we use Taylor expansion to obtain the (approximate) system of SDEs~\eqref{MainSDE} and then we further simplify it by taking the limit as the time delays and correlation times of the noises go to zero.  This is useful for applications as the final equations are easier to analyze than the original ones while still being in agreement with experimental results \cite{Pesce} (see also the discussion below).

\begin{figure}[h]
\centering
\includegraphics[width=8cm]{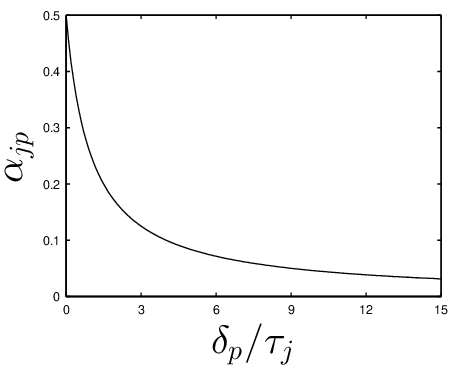}
\caption{Dependence of the coefficients $\alpha_{jp}$ of the noise-induced drift on the ratio between the corresponding delay time $\delta_p$ and noise correlation time $\tau_j$ (see equation~\eqref{alpha}). For $\delta_{p}/\tau_{j}~\rightarrow~\infty$, the solution converges to the solution of the  It\^o equation~\eqref{eq:heidodkIto}, while, for $\delta_{p}/\tau_{j}~\rightarrow~0$, it converges to the solution of the Stratonovich version~\eqref{eq:heidodk}.
}
\label{fig1}
\end{figure}

As a result of dependence of the noise coefficients on the state of the system (multiplicative noise), a \emph{noise-induced drift} appears in equation~\eqref{thm limiting equation}. It has a form analogous to that of the \emph{Stratonovich correction} to the It\^o equation with the noise term $\sum_j g^{ij} (\bm{y}_t) dW^j _t$. Each drift is a linear combination of the terms $g^{pj} (\bm{y} _t) \frac{\partial g^{ij} (\bm{y} _t)}{\partial y_p} $, but, while in the Stratonovich correction they all enter with coefficients equal to ${1\over2}$, their coefficients in the additional drift of the limiting equation~\eqref{thm limiting equation} are 
\begin{equation}
\frac{\frac{\Gamma}{\Omega ^2} \frac{\delta _p}{\tau _j} + \frac{1}{\Gamma} \left(1 - \frac{\delta _p}{\tau_j} \right)}{2 \left( \frac{\Gamma}{\Omega ^2} \frac{\delta _p}{\tau _j} \left(1 + \frac{\delta _p}{\tau _j} \right) + \frac{1}{\Gamma} \right) } \; .
\end{equation}
As noted in Remark~1, these coefficients approach their limiting values 
\begin{equation}\label{alpha}
\alpha _{jp} = {1\over2}\left(1 + \frac{\delta _p}{\tau_j}\right)^{-1},
\end{equation}
as the harmonic noise approaches the Ornstein-Uhlenbeck process, i.e. taking the limit $\Gamma, \Omega ^2 \rightarrow \infty$ while keeping $\frac{\Gamma}{\Omega ^2}$ constant (see Fig.~\ref{fig1}). One can interpret the terms of the noise-induced drift as representing different stochastic integration conventions, a point that is further explained in Ref.~\cite{Pesce}. For example,  if all $\delta_{p}/\tau_{j}~\rightarrow~\infty$, the solution converges to the solution of the It\^o equation:
\begin{equation}\label{eq:heidodkIto}
d y^i _t = f^i (\bm{y} _t) dt + \sum_j g^{ij} (\bm{y}_t)\,dW^j _t \; .
\end{equation}
On the other hand, if all $\delta_{p}/\tau_{j}~\rightarrow~0$, the solution converges to the solution of the Stratonovich version of~\eqref{eq:heidodkIto}:
\begin{equation}\label{eq:heidodk}
d y^i _t = f^i (\bm{y} _t) dt + \sum_j g^{ij} (\bm{y}_t) \circ dW^j _t \; .
\end{equation}


\begin{figure}[h!]
\centering
\includegraphics[width=13cm]{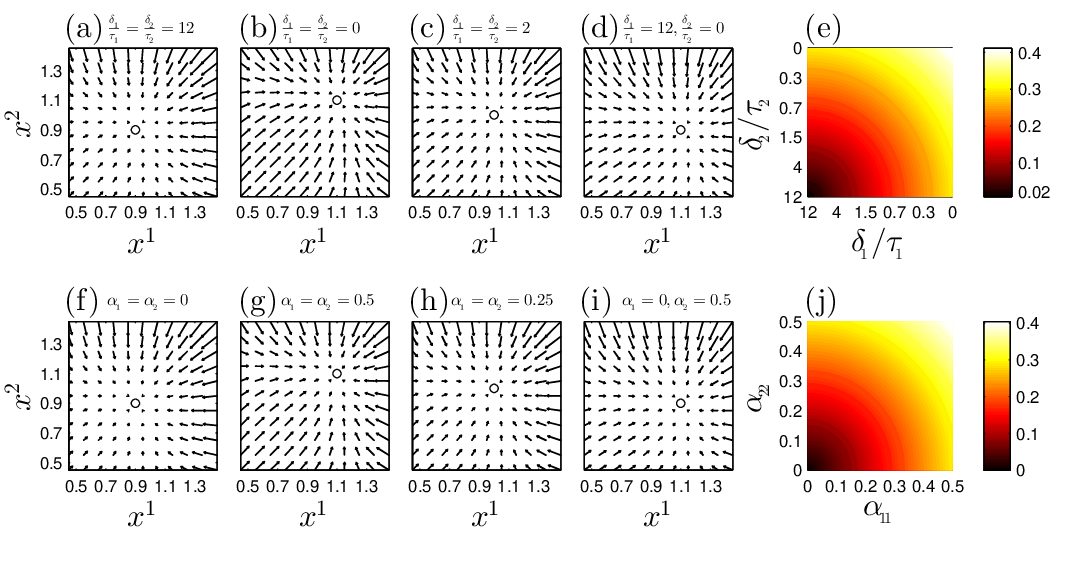}
\caption{(a-d) Drift fields (arrows) estimated from a numerical solution of the SDDEs~\eqref{model}  with colored noises ($A=B=0.1$ and $\sigma=0.2$) for various values of the ratios $\delta_1/\tau_1$ and $\delta_2/\tau_2$. The circles represent the zero-drift points. (e) Modulus of the displacement of the zero-drift point from the equilibrium position corresponding to equations~\eqref{model} without noise ($\sigma = 0$) as a function of $\delta_1/\tau_1$ and $\delta_2/\tau_2$.
(f-i) Drift fields (arrows) of the solution of the limiting SDEs~\eqref{thm limiting equation 2} corresponding to the SDDEs~\eqref{model}. $\alpha_{11}$ and $\alpha_{22}$ are given as functions of $\delta_1/\tau_1$ and $\delta_2/\tau_2$ by equation~\eqref{alpha}. The circles represent the zero-drift points. There is good agreement between (f-i) and (a-d). (j) Modulus of the displacement of the zero-drift point from the equilibrium position corresponding to equations~\eqref{model} without noise ($\sigma = 0$) for the solution of the limiting SDEs~\eqref{thm limiting equation 2} corresponding to the SDDEs~\eqref{model} as a function of $\alpha_{11}$ and $\alpha_{22}$. Again, (j) and (e) are in good agreement.}
\label{fig2}
\end{figure}

While convergence of equations~\eqref{MainSDE} to \eqref{thm limiting equation 2} is rigorously proven in this article, a specific system with non-zero values of $\delta_p$ and $\tau_j$ is more accurately described by \eqref{MainSDE} than by \eqref{thm limiting equation 2}. In addition, equations~\eqref{MainSDE} were obtained from the original system \eqref{SDDE} by an approximation (Taylor expansion). It is thus important to compare the behavior of the numerical solutions of \eqref{SDDE} and \eqref{thm limiting equation 2} in a particular case. As an example, we consider the two-dimensional system
\begin{equation}\label{model}
\left\{\begin{array}{ccc}
dx_{t}^{1} & = & A\, x_{t}^{1}\, (1-x_{t}^{1}-B\, x_{t}^{2})\, dt + \sigma\, x_{t-\delta_1}^{1}\, \eta_{t}^{1}\, dt \\
dx_{t}^{2} & = & A\, x_{t}^{2}\, (1-x_{t}^{2}-B\, x_{t}^{1})\, dt + \sigma\, x_{t-\delta_2}^{2}\, \eta_{t}^{2}\, dt \\
\end{array}\right.
\end{equation}
where $A,\,B,$ and $ \sigma$ are non-negative constants, $\eta_{t}^{1}$ and $\eta_{t}^{2}$ are colored noises with correlation times $\tau_1$ and $\tau_2$ respectively, and $\delta_1$ and $\delta_2$ are the delay times. These equations can describe, e.g., the dynamics of a noisy ecosystem where two populations are present whose sizes are proportional to the state variables $x_1$ and $x_2$. In the absence of noise ($\sigma=0$) the system described by equations~\eqref{model} is known as the competitive Lotka-Volterra model \cite{Murray} and has only one stable fixed point for which $x_{\rm eq}^{1}, x_{\rm eq}^{2} \neq 0$ at $x_{\rm eq}^{1} = x_{\rm eq}^{2} = (1+B)^{-1}$. For a noisy system (with or without delay), fixed points of the corresponding deterministic flow that is generated by the drift no longer describe equilibria. One can still estimate the system's drift field, as done in Ref.~\cite[Methods]{Pesce}, and identify the points in the state space where the drift is zero. For the system described by equations~\eqref{model}, the drift fields and the coordinates of the zero-drift point (for which $x
^{1}, x^{2} \neq 0$) depend on $\delta_1/\tau_1$ and $\delta_2/\tau_2$, as shown in Figs.~\ref{fig2}(a-e) for $A=B=0.1$ and $\sigma=0.2$. We now calculate the drift fields and zero-drift points of the corresponding limiting SDEs~\eqref{thm limiting equation 2}. The results, shown in Figs.~\ref{fig2}(f-j), are in good agreement with the ones obtained by directly simulating equation~\eqref{model}.


\section*{Acknowledgements}
We would like to thank the referee of this paper for insightful comments and the referee of the earlier \emph{Nature Communications} paper~\cite{Pesce} who suggested that we 
consider the main equation in multiple dimensions and with different time delays.  This suggestion led to the more general result presented here that more clearly reveals the interplay between the time delays and the correlation times of the noises. A.M. and J.W. were partially supported by the NSF grants DMS 1009508 and DMS 0623941. G.V. was partially supported by the Marie Curie Career Integration Grant (MC-CIG) No. PCIG11 GA-2012-321726.

\appendix 

\section*{Appendix}
We list some facts about the harmonic noise process.  The stationary harmonic noise process, defined as the stationary solution to \eqref{eq:harmonicnoise}, satisfies \cite{Schimansky-Geier, Wang}
\begin{equation}
\label{propertiesHNstat}
E[\eta ^j _t] = E[z ^j _t] = 0 \; , \hspace{15pt} E [ (\eta^j _t) ^2] = \frac{1}{2 \tau _j} \; , \hspace{15pt} E[(z ^j _t) ^2] = \frac{\Omega ^2}{2 \tau _j} \; ,
\end{equation}
and has covariance function
\begin{equation}
E[\eta ^j _t \eta ^j _{t + s}] = \frac{1}{2 \tau _j} e^{- \frac{\Gamma ^2}{2 \Omega ^2 \tau _j} s} \left[ \cos(\omega _1 s) + \frac{\Gamma ^2}{2 \tau _j \Omega ^2 \omega _1} \sin(\omega_1 s) \right], \hspace{5pt} s \geq 0
\end{equation}
where
$$ \omega_1 = \frac{\Gamma}{\Omega \tau _j} \sqrt{1 - \frac{\Gamma ^2}{4 \Omega ^2}}$$
We state a result concerning the convergence of the harmonic noise process to an Ornstein-Uhlenbeck process as $\Gamma, \Omega ^2 \rightarrow \infty$ while the ratio $\frac{\Gamma}{\Omega ^2}$ remains constant.  Letting $\tilde{\eta} ^j _t = \tau _j  \frac{\Omega ^2}{\Gamma} \eta ^j _t$, equation~\eqref{eq:harmonicnoise} becomes
$$
\begin{array}{ccl}
d \tilde{\eta}_t^j & = &  z ^j _t dt \\
dz ^j _t & = & - \frac{1}{\tau _j} \frac{\Gamma}{\Omega ^2} \Gamma z ^j _t dt - \frac{1}{\tau _j ^2} \frac{\Gamma}{\Omega ^2} \Gamma \tilde{\eta} ^j _t dt + \frac{1}{\tau _j}\Gamma dW ^j _t \; .
\end{array}
$$
Note that this is a system of linear SDEs with constant coefficients, and so it can be solved explicitly.  Thus, its limit can be studied directly, and we have the following result (this result can also be shown using the theorem of Hottovy et al. \cite{Hottovy}).  Let $\tilde{\chi} _t ^j$ be the solution to
\begin{equation*}
 d \tilde{\chi} _t ^j = - \frac{1}{\tau _j} \tilde{\chi} _t ^j dt + \frac{\Omega ^2}{\Gamma} d W _t ^j \; .
 \end{equation*} 
Then, as $\Gamma, \Omega ^2 \rightarrow \infty$ while the ratio $\frac{\Gamma}{\Omega ^2}$ remains constant,  $\tilde{\eta} _t^j$ converges to $\tilde{\chi} _t ^j$ in $L^2$ with respect to $C ([0, T], \mathbb{R})$, that is,
$$ \lim _{\Gamma \rightarrow \infty \;  (\frac{\Gamma}{\Omega ^2} \; \mathrm{constant})} E \left[ \left( \sup_{0 \leq t \leq T} |\tilde{\eta} _t ^j - \tilde{\chi} _t ^j| \right) ^2 \right] = 0 \; .
$$
Thus, letting $\chi _t ^j$ be the solution to
\begin{equation*}
 d \chi _t ^j = - \frac{1}{\tau _j} \chi _t ^j dt + \frac{1}{\tau _j} d W _t ^j
\end{equation*}
so that $\chi _t ^j$ is an Ornstein-Uhlenbeck process with correlation time $\tau _j$, we have that as $\Gamma, \Omega ^2 \rightarrow \infty$ while the ratio $\frac{\Gamma}{\Omega ^2}$ remains constant, $\eta _t ^j$ converges to $\chi _t ^j$ in $L^2$ (and therefore in probability) with respect to $C ([0, T], \mathbb{R}) $.

\end{document}